\newtheorem{theorem}{Theorem}
\theoremstyle{plain}
\newtheorem{corollary}{Corollary}
\newtheorem{definition}{Definition}
\newtheorem{lemma}{Lemma}
\newtheorem{remark}{Remark}
\numberwithin{equation}{section}
\begin{document}
\title[Some Integral Inequalities]{On Some New Hadamard Like Inequalities
for Co-ordinated\ $s-$convex Functions}
\author{M. Emin \"{O}zdemir$^{\blacklozenge }$}
\address{$^{\blacklozenge }$Ataturk University, K.K. Education Faculty,
Department of Mathematics, 25240, Erzurum, Turkey}
\email{emos@atauni.edu.tr}
\author{Mevl\"{u}t Tun\c{c}$^{\triangle }$}
\address{$^{\triangle }$University of Kilis 7 Aral\i k, Faculty of Science
and Arts, Department of Mathematics, 79000, Kilis, Turkey}
\email{mevluttunc@kilis.edu.tr}
\author{Ahmet Ocak Akdemir$^{\spadesuit }$}
\address{$^{\spadesuit }$A\u{g}r\i\ \.{I}brahim \c{C}e\c{c}en University,
Faculty of Science and Arts, Department of Mathematics, 04100, A\u{g}r\i ,
Turkey}
\email{ahmetakdemir@agri.edu.tr}
\subjclass[2000]{ 26D10,26D15}
\keywords{Hadamard inequality, Co-ordinates, co-ordinated $s-$convex.}

\begin{abstract}
In this paper, we prove some new inequalities of Hadamard-type for $s-$%
convex functions on the co-ordinates.
\end{abstract}

\maketitle

\section{INTRODUCTION}

Let $f:I\subseteq 
%TCIMACRO{\U{211d} }%
%BeginExpansion
\mathbb{R}
%EndExpansion
\rightarrow 
%TCIMACRO{\U{211d} }%
%BeginExpansion
\mathbb{R}
%EndExpansion
$ be a convex function defined on the interval $I$ of real numbers and $a<b.$
The following double inequality;%
\begin{equation*}
f\left( \frac{a+b}{2}\right) \leq \frac{1}{b-a}\dint\limits_{a}^{b}f(x)dx%
\leq \frac{f(a)+f(b)}{2}
\end{equation*}

is well known in the literature as Hadamard's inequality. Both inequalities
hold in the reversed direction if $f$ is concave.

In \cite{alomari}, Alomari and Darus defined s-convex functions on the
co-ordinates as following:

\begin{definition}
Consider the bidimensional interval $\Delta =[a,b]\times \lbrack c,d]$ in $%
\left[ 0,\infty \right) ^{2}$ with $a<b$ and $c<d.$ The mapping $f:\Delta
\rightarrow 
%TCIMACRO{\U{211d} }%
%BeginExpansion
\mathbb{R}
%EndExpansion
$ is $s-$convex in the second sense on $\Delta $ if%
\begin{equation*}
f(\lambda x+(1-\lambda )z,\lambda y+(1-\lambda )w)\leq \lambda
^{s}f(x,y)+(1-\lambda )^{s}f(z,w)
\end{equation*}%
hold for all $(x,y),(z,w)\in \Delta $ with $\lambda \in \lbrack 0,1].$and
for some fixed $s\in \left( 0,1\right] .$
\end{definition}

\textit{A function }$f:\Delta \rightarrow 
%TCIMACRO{\U{211d} }%
%BeginExpansion
\mathbb{R}
%EndExpansion
$\textit{\ is }$s-$\textit{convex on }$\Delta $\textit{\ is called
co-ordinated }$s-$\textit{convex on }$\Delta $\textit{\ if the partial
mappings }$f_{y}:[a,b]\rightarrow 
%TCIMACRO{\U{211d} }%
%BeginExpansion
\mathbb{R}
%EndExpansion
,$\textit{\ }$f_{y}(u)=f(u,y)$\textit{\ and }$f_{x}:[c,d]\rightarrow 
%TCIMACRO{\U{211d} }%
%BeginExpansion
\mathbb{R}
%EndExpansion
,$\textit{\ }$f_{x}(v)=f(x,v)$\textit{\ are }$s-$\textit{convex for all }$%
y\in \lbrack c,d]$\textit{\ and }$x\in \lbrack a,b]$\textit{\ with some
fixed }$s\in \left( 0,1\right] .$

\bigskip

Recall that the mapping $f:\Delta \rightarrow 
%TCIMACRO{\U{211d} }%
%BeginExpansion
\mathbb{R}
%EndExpansion
$ is $s-$convex on $\Delta $ if the following inequality holds.

Moreover, in \cite{alomari}, Alomari and Darus established the following
inequalities of Hadamard's type for co-ordinated $s-$convex functions on a
rectangle from the plane $%
%TCIMACRO{\U{211d} }%
%BeginExpansion
\mathbb{R}
%EndExpansion
^{2}.$

\begin{theorem}
Suppose that $f:\Delta =[a,b]\times \lbrack c,d]\subset \left[ 0,\infty
\right) ^{2}\rightarrow \left[ 0,\infty \right) $ is $s-$convex function on
the co-ordinates on $\Delta $. Then one has the inequalities;%
\begin{eqnarray}
&&4^{s-1}\ f\left( \frac{a+b}{2},\frac{c+d}{2}\right)  \label{1.1} \\
&\leq &2^{s-2}\left[ \frac{1}{b-a}\int_{a}^{b}f(x,\frac{c+d}{2}dx+\frac{1}{%
d-c}\int_{c}^{d}f\left( \frac{a+b}{2},y\right) dy\right]  \notag \\
&\leq &\frac{1}{\left( b-a\right) \left( d-c\right) }\int_{a}^{b}%
\int_{c}^{d}f\left( x,y\right) dydx  \notag \\
&\leq &\frac{1}{2\left( s+1\right) }\left[ \frac{1}{b-a}\int_{a}^{b}f\left(
x,c\right) dx+\frac{1}{b-a}\int_{a}^{b}f\left( x,d\right) dx\right.  \notag
\\
&&\left. +\frac{1}{d-c}\int_{c}^{d}f\left( a,y\right) dy+\frac{1}{d-c}%
\int_{c}^{d}f\left( b,y\right) dy\right]  \notag \\
&\leq &\frac{f\left( a,c\right) +f\left( a,d\right) +f\left( b,c\right)
+f\left( b,d\right) }{\left( s+1\right) ^{2}}.  \notag
\end{eqnarray}
\end{theorem}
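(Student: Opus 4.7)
The plan is to prove all four inequalities in the chain by reducing each to the one-dimensional Hermite--Hadamard inequality for $s$-convex functions, namely
\begin{equation*}
2^{s-1}\,g\!\left(\tfrac{p+q}{2}\right)\leq \frac{1}{q-p}\int_{p}^{q}g(t)\,dt\leq \frac{g(p)+g(q)}{s+1},
\end{equation*}
which is available for any $s$-convex $g:[p,q]\to\mathbb{R}$. Since $f$ is co-ordinated $s$-convex, every partial mapping $f_{y}(\cdot)=f(\cdot,y)$ on $[a,b]$ and $f_{x}(\cdot)=f(x,\cdot)$ on $[c,d]$ is $s$-convex and so qualifies.

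For the first inequality, I would apply the left-hand $s$-convex bound to the partial mapping $x\mapsto f\!\left(x,\frac{c+d}{2}\right)$ on $[a,b]$ and to $y\mapsto f\!\left(\frac{a+b}{2},y\right)$ on $[c,d]$. Each gives a lower bound of the form $2^{s-1}f\!\left(\frac{a+b}{2},\frac{c+d}{2}\right)$ against the corresponding one-variable average. Adding the two bounds and dividing by $2$ yields $2^{s-1}f\!\left(\frac{a+b}{2},\frac{c+d}{2}\right)\leq \frac{1}{2}[\,\cdot\,]$, and multiplying by $2^{s-1}$ produces the factor $4^{s-1}=2^{2s-2}$ on the left and $2^{s-2}$ on the right, matching the first claimed inequality.

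For the second inequality, I would fix $x$ and apply the same left-hand bound to $f_{x}$ on $[c,d]$, getting $2^{s-1}f\!\left(x,\frac{c+d}{2}\right)\leq \frac{1}{d-c}\int_{c}^{d}f(x,y)\,dy$; integrating in $x$ and dividing by $b-a$ bounds the mid-slice average by the full double average. Symmetrically, fixing $y$ and using $f_{y}$ on $[a,b]$ bounds the other mid-slice average. Averaging these two relations gives exactly the desired $2^{s-2}[\,\cdot\,]\leq \frac{1}{(b-a)(d-c)}\iint f$.

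The third and fourth inequalities are handled analogously but with the right-hand piece of the one-dimensional $s$-convex Hermite--Hadamard inequality. For the third, I integrate the slice bounds $\frac{1}{d-c}\int_{c}^{d}f(x,y)\,dy\leq \frac{f(x,c)+f(x,d)}{s+1}$ in $x$, and symmetrically in $y$, then average; the factor $\frac{1}{2(s+1)}$ appears naturally. For the fourth, I apply the 1D upper bound once more to each of the four boundary averages $\frac{1}{b-a}\int_{a}^{b}f(x,c)\,dx$ etc., which produces $\frac{f(a,c)+f(b,c)}{s+1}$ and the three companion expressions; summing them accounts for each corner value $f(a,c),f(a,d),f(b,c),f(b,d)$ exactly twice, and combining with the prefactor $\frac{1}{2(s+1)}$ yields the announced $\frac{1}{(s+1)^{2}}$. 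There is no real obstacle here; the main bookkeeping issue is keeping the constants straight across the two additions (by two and by two) so that $4^{s-1}$, $2^{s-2}$, $\frac{1}{2(s+1)}$, and $\frac{1}{(s+1)^{2}}$ all come out correctly, which is the step I would check most carefully.
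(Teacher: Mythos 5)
Your argument is correct: since co-ordinated $s$-convexity makes every partial mapping $f_x$, $f_y$ $s$-convex in the second sense, applying the one-dimensional Hermite--Hadamard inequality $2^{s-1}g\left(\frac{p+q}{2}\right)\leq\frac{1}{q-p}\int_p^q g\leq\frac{g(p)+g(q)}{s+1}$ slice-wise, integrating, and averaging yields each link of the chain with exactly the constants $4^{s-1}$, $2^{s-2}$, $\frac{1}{2(s+1)}$, $\frac{1}{(s+1)^2}$ as you compute. The paper itself gives no proof of this theorem (it is quoted from Alomari and Darus), and your slice-wise reduction to the one-dimensional $s$-convex inequality is precisely the standard argument used in that source, so there is nothing to add beyond noting that the integrability of the partial mappings needed to justify the intermediate integrals is tacitly assumed.
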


Similar results can be found in (\cite{SS}-\cite{ozdmr}).

\bigskip

However, \cite{ozdmr} \"{O}zdemir et.al. established the following lemma for
twice partial differentiable mapping on $\Delta =\left[ a,b\right] \times %
\left[ c,d\right] .$

\begin{lemma}
\label{l1}Let $f:\Delta =\left[ a,b\right] \times \left[ c,d\right]
\rightarrow 
%TCIMACRO{\U{211d} }%
%BeginExpansion
\mathbb{R}
%EndExpansion
$ be a twice partial differentiable mapping on $\Delta =\left[ a,b\right]
\times \left[ c,d\right] .$ If $\frac{\partial ^{2}f}{\partial t\partial
\lambda }\in L\left( \Delta \right) ,$ then the following equality holds:%
\begin{eqnarray*}
&&\frac{1}{\left( b-a\right) \left( d-c\right) }\left[ A-\left( x-a\right)
\int\limits_{c}^{d}f\left( a,v\right) dv-\left( b-x\right)
\int\limits_{c}^{d}f\left( b,v\right) dv\right. \\
&&-\left( d-y\right) \int\limits_{a}^{b}f\left( u,d\right) du-\left(
y-c\right) \int\limits_{a}^{b}f\left( u,c\right)
du+\int\limits_{a}^{b}\int\limits_{c}^{d}f\left( u,v\right) dudv \\
&=&\frac{\left( x-a\right) ^{2}\left( y-c\right) ^{2}}{\left( b-a\right)
\left( d-c\right) }\int\limits_{0}^{1}\int\limits_{0}^{1}\left( t-1\right)
\left( \lambda -1\right) \frac{\partial ^{2}f}{\partial t\partial \lambda }%
\left( tx+\left( 1-t\right) a,\lambda y+\left( 1-\lambda \right) c\right)
d\lambda dt \\
&&+\frac{\left( x-a\right) ^{2}\left( d-y\right) ^{2}}{\left( b-a\right)
\left( d-c\right) }\int\limits_{0}^{1}\int\limits_{0}^{1}\left( t-1\right)
\left( 1-\lambda \right) \frac{\partial ^{2}f}{\partial t\partial \lambda }%
\left( tx+\left( 1-t\right) a,\lambda y+\left( 1-\lambda \right) d\right)
d\lambda dt \\
&&+\frac{\left( b-x\right) ^{2}\left( y-c\right) ^{2}}{\left( b-a\right)
\left( d-c\right) }\int\limits_{0}^{1}\int\limits_{0}^{1}\left( 1-t\right)
\left( \lambda -1\right) \frac{\partial ^{2}f}{\partial t\partial \lambda }%
\left( tx+\left( 1-t\right) b,\lambda y+\left( 1-\lambda \right) c\right)
d\lambda dt \\
&&+\frac{\left( b-x\right) ^{2}\left( d-y\right) ^{2}}{\left( b-a\right)
\left( d-c\right) }\int\limits_{0}^{1}\int\limits_{0}^{1}\left( 1-t\right)
\left( 1-\lambda \right) \frac{\partial ^{2}f}{\partial t\partial \lambda }%
\left( tx+\left( 1-t\right) b,\lambda y+\left( 1-\lambda \right) d\right)
d\lambda dt
\end{eqnarray*}%
where 
\begin{eqnarray*}
A &=&\frac{\left( x-a\right) \left( y-c\right) f\left( a,c\right) +\left(
x-a\right) \left( d-y\right) f\left( a,d\right) }{\left( b-a\right) \left(
d-c\right) } \\
&&+\frac{\left( b-x\right) \left( y-c\right) f\left( b,c\right) +\left(
b-x\right) \left( d-y\right) f\left( b,d\right) }{\left( b-a\right) \left(
d-c\right) }.
\end{eqnarray*}
\end{lemma}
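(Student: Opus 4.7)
The identity is a two-dimensional Montgomery-type formula that splits the rectangle $\Delta $ into the four sub-rectangles meeting at the interior point $(x,y)$. The strategy is to convert each of the four integrals on the right-hand side into an integral of the mixed derivative over its sub-rectangle, and then to eliminate that derivative by a double integration by parts.

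First I would change variables in each of the four terms. In the first term, set $u=tx+(1-t)a$ and $v=\lambda y+(1-\lambda )c$, so that $du=(x-a)\,dt$, $dv=(y-c)\,d\lambda $, $t-1=(u-x)/(x-a)$, and $\lambda -1=(v-y)/(y-c)$. The prefactor $(x-a)^{2}(y-c)^{2}/[(b-a)(d-c)]$ exactly cancels the Jacobians and the denominators introduced by these relations, so the first term reduces to
\begin{equation*}
\frac{1}{(b-a)(d-c)}\int_{a}^{x}\int_{c}^{y}(u-x)(v-y)\,\frac{\partial ^{2}f}{\partial u\,\partial v}(u,v)\,dv\,du.
\end{equation*}
Analogous substitutions, using $u=tx+(1-t)b$ in the third and fourth terms and $v=\lambda y+(1-\lambda )d$ in the second and fourth, transform the remaining integrals into integrals of the same shape over $[a,x]\times \lbrack y,d]$, $[x,b]\times \lbrack c,y]$, and $[x,b]\times \lbrack y,d]$. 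The minus signs coming from the orientation-reversing substitutions cancel precisely against the factors $(1-t)$ or $(1-\lambda )$ in those integrands.

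Next, on each sub-rectangle I would integrate by parts twice. Integration by parts in $v$ applied to $\int_{c}^{y}(v-y)\,\frac{\partial ^{2}f}{\partial u\,\partial v}(u,v)\,dv$ exploits that $v-y$ vanishes at $v=y$ and yields $(y-c)\,\frac{\partial f}{\partial u}(u,c)-\int_{c}^{y}\frac{\partial f}{\partial u}(u,v)\,dv$. A further integration by parts in $u$ applied to $\int_{a}^{x}(u-x)\,\frac{\partial f}{\partial u}(u,\cdot )\,du$ uses that $u-x$ vanishes at $u=x$ and produces $(x-a)f(a,\cdot )-\int_{a}^{x}f(u,\cdot )\,du$. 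Assembling these, the first term contributes
\begin{multline*}
\frac{1}{(b-a)(d-c)}\Bigl\{(x-a)(y-c)f(a,c)-(y-c)\int_{a}^{x}f(u,c)\,du \\
-(x-a)\int_{c}^{y}f(a,v)\,dv+\int_{a}^{x}\int_{c}^{y}f(u,v)\,dv\,du\Bigr\},
\end{multline*}
and the three remaining terms yield analogous expressions centered at the corners $(a,d)$, $(b,c)$, and $(b,d)$.

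Finally I would sum the four contributions. The four weighted corner values combine to give $A$. Pairs of partial edge integrals glue together across $(x,y)$, for example $\int_{a}^{x}f(u,c)\,du+\int_{x}^{b}f(u,c)\,du=\int_{a}^{b}f(u,c)\,du$, and similarly on the three other edges. The four partial double integrals merge into $\int_{a}^{b}\int_{c}^{d}f(u,v)\,dv\,du$. Collecting terms reproduces the left-hand side of the identity. The principal obstacle is purely organizational: in the two terms using $u=tx+(1-t)b$ the orientation-reversing substitution must combine correctly with the $(1-t)$ factor so that the resulting $(u-x)$ has the expected sign, with an identical check needed in $\lambda $ for the two terms using $v=\lambda y+(1-\lambda )d$. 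Once those sign conventions are settled, the double integration by parts and the final recombination of partial integrals are routine.
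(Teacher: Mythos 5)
Your derivation is correct, and it is essentially the standard argument for this identity: the paper itself offers no proof of Lemma \ref{l1} but imports it from \cite{ozdmr}, where the equality is established by integrating each of the four double integrals by parts twice directly in the variables $t$ and $\lambda $; your substitutions $u=tx+(1-t)a$ (resp.\ $u=tx+(1-t)b$) and $v=\lambda y+(1-\lambda )c$ (resp.\ $v=\lambda y+(1-\lambda )d$) followed by double integration by parts over the four sub-rectangles $[a,x]\times \lbrack c,y]$, $[a,x]\times \lbrack y,d]$, $[x,b]\times \lbrack c,y]$, $[x,b]\times \lbrack y,d]$ is the same computation written in the $(u,v)$ variables, and your sign bookkeeping for the orientation-reversing substitutions is handled correctly. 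One caveat about your last step: what the summation of your four contributions actually produces is
\begin{equation*}
A-\frac{1}{\left( b-a\right) \left( d-c\right) }\left[ \left( x-a\right)
\int_{c}^{d}f\left( a,v\right) dv+\left( b-x\right) \int_{c}^{d}f\left(
b,v\right) dv+\left( d-y\right) \int_{a}^{b}f\left( u,d\right) du+\left(
y-c\right) \int_{a}^{b}f\left( u,c\right) du-\int_{a}^{b}\int_{c}^{d}f\left(
u,v\right) dudv\right] ,
\end{equation*}
i.e.\ the weighted corner sum enters divided by $(b-a)(d-c)$ exactly once, which is $A$ itself. In the statement as printed, $A$ sits inside the outer factor $\frac{1}{(b-a)(d-c)}$ and so carries that division twice; taken literally the displayed identity already fails for $f\equiv 1$ (its left side is $\frac{1-(b-a)(d-c)}{(b-a)(d-c)}$ while the right side vanishes). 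So collecting terms does not literally ``reproduce the left-hand side'' as displayed: your argument proves the correctly normalized version, and the misplacement of $A$ (which propagates into the corollaries, e.g.\ the $f(b,d)$ term in Corollary \ref{c1}) is a typographical defect of the statement rather than a gap in your proof.
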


The main purpose of this paper is to prove some new inequalities of
Hadamard-type for $s-$convex functions on the co-ordinates.

\section{MAIN RESULTS}

\begin{theorem}
\label{t1}Let $f:\Delta =\left[ a,b\right] \times \left[ c,d\right]
\rightarrow 
%TCIMACRO{\U{211d} }%
%BeginExpansion
\mathbb{R}
%EndExpansion
$ be a partial differentiable mapping on $\Delta =\left[ a,b\right] \times %
\left[ c,d\right] $ and $\frac{\partial ^{2}f}{\partial t\partial \lambda }%
\in L\left( \Delta \right) $. If $\left\vert \frac{\partial ^{2}f}{\partial
t\partial \lambda }\right\vert $ is a $s-$convex function on the
co-ordinates on $\Delta ,$ for some fixed $s\in \left( 0,1\right] $, then
the following inequality holds;%
\begin{eqnarray}
&&  \label{21} \\
&&\left\vert \frac{1}{\left( b-a\right) \left( d-c\right) }\left[ A-\left(
x-a\right) \int\limits_{c}^{d}f\left( a,v\right) dv-\left( b-x\right)
\int\limits_{c}^{d}f\left( b,v\right) dv\right. \right.  \notag \\
&&\left. \left. -\left( d-y\right) \int\limits_{a}^{b}f\left( u,d\right)
du-\left( y-c\right) \int\limits_{a}^{b}f\left( u,c\right)
du+\int\limits_{a}^{b}\int\limits_{c}^{d}f\left( u,v\right) dudv\right]
\right\vert  \notag \\
&\leq &\frac{1}{\left( b-a\right) \left( d-c\right) \left( s+2\right) ^{2}} 
\notag \\
&&\left[ \left( \frac{\left( \left( x-a\right) ^{2}+\left( b-x\right)
^{2}\right) \left( \left( y-c\right) ^{2}+\left( d-y\right) ^{2}\right) }{%
\left( s+1\right) ^{2}}\right) \left\vert \frac{\partial ^{2}f}{\partial
t\partial \lambda }\left( x,y\right) \right\vert \right.  \notag \\
&&+\left( \frac{\left( x-a\right) ^{2}\left( \left( y-c\right) ^{2}+\left(
d-y\right) ^{2}\right) }{\left( s+1\right) }\right) \left\vert \frac{%
\partial ^{2}f}{\partial t\partial \lambda }\left( a,y\right) \right\vert 
\notag \\
&&+\left( \frac{\left( b-x\right) ^{2}\left( \left( y-c\right) ^{2}+\left(
d-y\right) ^{2}\right) }{\left( s+1\right) }\right) \left\vert \frac{%
\partial ^{2}f}{\partial t\partial \lambda }\left( b,y\right) \right\vert 
\notag \\
&&+\left( \frac{\left( y-c\right) ^{2}\left( \left( x-a\right) ^{2}+\left(
b-x\right) ^{2}\right) }{\left( s+1\right) }\right) \left\vert \frac{%
\partial ^{2}f}{\partial t\partial \lambda }\left( x,c\right) \right\vert 
\notag \\
&&+\left( \frac{\left( d-y\right) ^{2}\left( \left( x-a\right) ^{2}+\left(
b-x\right) ^{2}\right) }{\left( s+1\right) }\right) \left\vert \frac{%
\partial ^{2}f}{\partial t\partial \lambda }\left( x,d\right) \right\vert 
\notag \\
&&+\left( x-a\right) ^{2}\left( y-c\right) ^{2}\left\vert \frac{\partial
^{2}f}{\partial t\partial \lambda }\left( a,c\right) \right\vert +\left(
x-a\right) ^{2}\left( d-y\right) ^{2}\left\vert \frac{\partial ^{2}f}{%
\partial t\partial \lambda }\left( a,d\right) \right\vert  \notag \\
&&\left. +\left( b-x\right) ^{2}\left( y-c\right) ^{2}\left\vert \frac{%
\partial ^{2}f}{\partial t\partial \lambda }\left( b,c\right) \right\vert
+\left( b-x\right) ^{2}\left( d-y\right) ^{2}\left\vert \frac{\partial ^{2}f%
}{\partial t\partial \lambda }\left( b,d\right) \right\vert \right]  \notag
\end{eqnarray}%
where A is as above.
\end{theorem}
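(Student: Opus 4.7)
\medskip

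\noindent\textbf{Proof proposal.} The plan is to start from the identity in Lemma~\ref{l1}, apply the triangle inequality so the absolute value moves inside each of the four double integrals on the right-hand side, and then estimate each integrand by invoking the $s$-convexity on coordinates of $\left\vert \frac{\partial ^{2}f}{\partial t\partial \lambda }\right\vert $. Since $|t-1|=1-t$ and $|\lambda -1|=1-\lambda $ on $[0,1]$, the absolute kernels in the four integrals reduce to the non-negative product $(1-t)(1-\lambda )$, which is convenient for what follows.

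For each of the four integrals I would apply $s$-convexity in two stages: first in the $x$-slot, then in the $y$-slot. Applied to the first integral (the one with endpoints $(a,c)$), this expands the integrand into four pieces evaluated at $(x,y),(x,c),(a,y),(a,c)$, weighted respectively by $t^{s}\lambda ^{s}$, $t^{s}(1-\lambda )^{s}$, $(1-t)^{s}\lambda ^{s}$ and $(1-t)^{s}(1-\lambda )^{s}$. The resulting $t$- and $\lambda $-integrals are the elementary beta evaluations
\[
\int_{0}^{1}(1-t)\,t^{s}\,dt=\frac{1}{(s+1)(s+2)},\qquad \int_{0}^{1}(1-t)^{s+1}\,dt=\frac{1}{s+2},
\]
so each of the four pieces picks up one of the factors $\frac{1}{(s+1)^{2}(s+2)^{2}}$, $\frac{1}{(s+1)(s+2)^{2}}$, or $\frac{1}{(s+2)^{2}}$. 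The same procedure applied to the other three integrals produces analogous contributions but with one or both of $a,c$ replaced by $b,d$, so that the set of arguments appearing across all four integrals is exactly $\{(x,y),(a,y),(b,y),(x,c),(x,d),(a,c),(a,d),(b,c),(b,d)\}$.

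The main obstacle is therefore not any analytic subtlety but the bookkeeping: the four integrals together generate sixteen terms, which must collapse into the nine listed in \eqref{21}. The central value $\left\vert \frac{\partial ^{2}f}{\partial t\partial \lambda }(x,y)\right\vert $ receives contributions from all four integrals, producing the combined factor $\bigl((x-a)^{2}+(b-x)^{2}\bigr)\bigl((y-c)^{2}+(d-y)^{2}\bigr)/(s+1)^{2}$; each ``edge'' value $\left\vert \frac{\partial ^{2}f}{\partial t\partial \lambda }\right\vert (a,y),(b,y),(x,c),(x,d)$ receives contributions from exactly two integrals and carries a single factor $1/(s+1)$; each ``corner'' value $\left\vert \frac{\partial ^{2}f}{\partial t\partial \lambda }\right\vert (a,c),(a,d),(b,c),(b,d)$ receives a contribution from exactly one integral and has no $1/(s+1)$ factor. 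After factoring out the common constant $\frac{1}{(b-a)(d-c)(s+2)^{2}}$ and collecting the geometric weights $(x-a)^{2},(b-x)^{2},(y-c)^{2},(d-y)^{2}$ according to which of the four integrals produced them, the sum matches the right-hand side of \eqref{21} term by term, completing the argument.
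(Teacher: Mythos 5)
Your proposal is correct and follows essentially the same route as the paper's proof: apply the modulus to the identity of Lemma \ref{l1}, use co-ordinated $s$-convexity successively in the two slots, evaluate the beta-type integrals $\int_{0}^{1}(1-t)t^{s}\,dt=\frac{1}{(s+1)(s+2)}$ and $\int_{0}^{1}(1-t)^{s+1}\,dt=\frac{1}{s+2}$, and collect the sixteen resulting terms into the nine of \eqref{21}. Your accounting of which weights attach to the center, edge, and corner evaluations matches the paper's computation exactly.
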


\begin{proof}
From Lemma \ref{l1} and using the property of modulus, we have%
\begin{eqnarray*}
&&\left\vert \frac{1}{\left( b-a\right) \left( d-c\right) }\left[ A-\left(
x-a\right) \int\limits_{c}^{d}f\left( a,v\right) dv-\left( b-x\right)
\int\limits_{c}^{d}f\left( b,v\right) dv\right. \right.  \\
&&\left. \left. -\left( d-y\right) \int\limits_{a}^{b}f\left( u,d\right)
du-\left( y-c\right) \int\limits_{a}^{b}f\left( u,c\right)
du+\int\limits_{a}^{b}\int\limits_{c}^{d}f\left( u,v\right) dudv\right]
\right\vert  \\
&\leq &\frac{\left( x-a\right) ^{2}\left( y-c\right) ^{2}}{\left( b-a\right)
\left( d-c\right) } \\
&&\times \int\limits_{0}^{1}\int\limits_{0}^{1}\left\vert \left( t-1\right)
\left( \lambda -1\right) \right\vert \left\vert \frac{\partial ^{2}f}{%
\partial t\partial \lambda }\left( tx+\left( 1-t\right) a,\lambda y+\left(
1-\lambda \right) c\right) \right\vert d\lambda dt \\
&&+\frac{\left( x-a\right) ^{2}\left( d-y\right) ^{2}}{\left( b-a\right)
\left( d-c\right) } \\
&&\times \int\limits_{0}^{1}\int\limits_{0}^{1}\left\vert \left( t-1\right)
\left( 1-\lambda \right) \right\vert \left\vert \frac{\partial ^{2}f}{%
\partial t\partial \lambda }\left( tx+\left( 1-t\right) a,\lambda y+\left(
1-\lambda \right) d\right) \right\vert d\lambda dt \\
&&+\frac{\left( b-x\right) ^{2}\left( y-c\right) ^{2}}{\left( b-a\right)
\left( d-c\right) } \\
&&\times \int\limits_{0}^{1}\int\limits_{0}^{1}\left\vert \left( 1-t\right)
\left( \lambda -1\right) \right\vert \left\vert \frac{\partial ^{2}f}{%
\partial t\partial \lambda }\left( tx+\left( 1-t\right) b,\lambda y+\left(
1-\lambda \right) c\right) \right\vert d\lambda dt \\
&&+\frac{\left( b-x\right) ^{2}\left( d-y\right) ^{2}}{\left( b-a\right)
\left( d-c\right) } \\
&&\times \int\limits_{0}^{1}\int\limits_{0}^{1}\left\vert \left( 1-t\right)
\left( 1-\lambda \right) \right\vert \left\vert \frac{\partial ^{2}f}{%
\partial t\partial \lambda }\left( tx+\left( 1-t\right) b,\lambda y+\left(
1-\lambda \right) d\right) \right\vert d\lambda dt.
\end{eqnarray*}%
Since $\left\vert \frac{\partial ^{2}f}{\partial t\partial s}\right\vert $
is co-ordinated $s-$convex, for some fixed $s\in \left( 0,1\right] $, we can
write%
\begin{eqnarray*}
&&\left\vert \frac{1}{\left( b-a\right) \left( d-c\right) }\left[ A-\left(
x-a\right) \int\limits_{c}^{d}f\left( a,v\right) dv-\left( b-x\right)
\int\limits_{c}^{d}f\left( b,v\right) dv\right. \right.  \\
&&\left. \left. -\left( d-y\right) \int\limits_{a}^{b}f\left( u,d\right)
du-\left( y-c\right) \int\limits_{a}^{b}f\left( u,c\right)
du+\int\limits_{a}^{b}\int\limits_{c}^{d}f\left( u,v\right) dudv\right]
\right\vert  \\
&\leq &\frac{\left( x-a\right) ^{2}\left( y-c\right) ^{2}}{\left( b-a\right)
\left( d-c\right) }\int\limits_{0}^{1}\left\vert \left( \lambda -1\right)
\right\vert  \\
&&\times \left[ \int\limits_{0}^{1}\left( t-1\right) t^{s}\left\vert \frac{%
\partial ^{2}f}{\partial t\partial \lambda }\left( x,\lambda y+\left(
1-\lambda \right) c\right) \right\vert dt+\int\limits_{0}^{1}\left(
t-1\right) \left( 1-t\right) ^{s}\left\vert \frac{\partial ^{2}f}{\partial
t\partial \lambda }\left( a,\lambda y+\left( 1-\lambda \right) c\right)
\right\vert dt\right] d\lambda 
\end{eqnarray*}%
\begin{eqnarray*}
&& \\
&&+\frac{\left( x-a\right) ^{2}\left( d-y\right) ^{2}}{\left( b-a\right)
\left( d-c\right) }\int\limits_{0}^{1}\left\vert \left( 1-\lambda \right)
\right\vert  \\
&&\times \left[ \int\limits_{0}^{1}\left( t-1\right) t^{s}\left\vert \frac{%
\partial ^{2}f}{\partial t\partial \lambda }\left( x,\lambda y+\left(
1-\lambda \right) d\right) \right\vert dt+\int\limits_{0}^{1}\left(
t-1\right) \left( 1-t\right) ^{s}\left\vert \frac{\partial ^{2}f}{\partial
t\partial \lambda }\left( a,\lambda y+\left( 1-\lambda \right) d\right)
\right\vert dt\right] d\lambda  \\
&&+\frac{\left( b-x\right) ^{2}\left( y-c\right) ^{2}}{\left( b-a\right)
\left( d-c\right) }\int\limits_{0}^{1}\left\vert \left( \lambda -1\right)
\right\vert  \\
&&\times \left[ \int\limits_{0}^{1}\left( 1-t\right) t^{s}\left\vert \frac{%
\partial ^{2}f}{\partial t\partial \lambda }\left( x,\lambda y+\left(
1-\lambda \right) c\right) \right\vert dt+\int\limits_{0}^{1}\left(
1-t\right) \left( 1-t\right) ^{s}\left\vert \frac{\partial ^{2}f}{\partial
t\partial \lambda }\left( b,\lambda y+\left( 1-\lambda \right) c\right)
\right\vert dt\right] d\lambda  \\
&&+\frac{\left( b-x\right) ^{2}\left( d-y\right) ^{2}}{\left( b-a\right)
\left( d-c\right) }\int\limits_{0}^{1}\left\vert \left( 1-\lambda \right)
\right\vert  \\
&&\times \left[ \int\limits_{0}^{1}\left( 1-t\right) t^{s}\left\vert \frac{%
\partial ^{2}f}{\partial t\partial \lambda }\left( x,\lambda y+\left(
1-\lambda \right) d\right) \right\vert dt+\int\limits_{0}^{1}\left(
1-t\right) \left( 1-t\right) ^{s}\left\vert \frac{\partial ^{2}f}{\partial
t\partial \lambda }\left( b,\lambda y+\left( 1-\lambda \right) d\right)
\right\vert dt\right] d\lambda .
\end{eqnarray*}%
By computing these integrals, we obtain%
\begin{eqnarray*}
&&\left\vert \frac{1}{\left( b-a\right) \left( d-c\right) }\left[ A-\left(
x-a\right) \int\limits_{c}^{d}f\left( a,v\right) dv-\left( b-x\right)
\int\limits_{c}^{d}f\left( b,v\right) dv\right. \right.  \\
&&\left. \left. -\left( d-y\right) \int\limits_{a}^{b}f\left( u,d\right)
du-\left( y-c\right) \int\limits_{a}^{b}f\left( u,c\right)
du+\int\limits_{a}^{b}\int\limits_{c}^{d}f\left( u,v\right) dudv\right]
\right\vert  \\
&\leq &\frac{\left( x-a\right) ^{2}\left( y-c\right) ^{2}}{\left( b-a\right)
\left( d-c\right) }\int\limits_{0}^{1}\left\vert \lambda -1\right\vert \left[
\frac{-1}{\left( s+1\right) \left( s+2\right) }\left\vert \frac{\partial
^{2}f}{\partial t\partial \lambda }\left( x,\lambda y+\left( 1-\lambda
\right) c\right) \right\vert \right.  \\
&&\left. -\frac{1}{s+2}\left\vert \frac{\partial ^{2}f}{\partial t\partial
\lambda }\left( a,\lambda y+\left( 1-\lambda \right) c\right) \right\vert %
\right] d\lambda  \\
&&+\frac{\left( x-a\right) ^{2}\left( d-y\right) ^{2}}{\left( b-a\right)
\left( d-c\right) }\int\limits_{0}^{1}\left\vert 1-\lambda \right\vert \left[
\frac{-1}{\left( s+1\right) \left( s+2\right) }\left\vert \frac{\partial
^{2}f}{\partial t\partial \lambda }\left( x,\lambda y+\left( 1-\lambda
\right) d\right) \right\vert \right.  \\
&&\left. -\frac{1}{s+2}\left\vert \frac{\partial ^{2}f}{\partial t\partial
\lambda }\left( a,\lambda y+\left( 1-\lambda \right) d\right) \right\vert %
\right] d\lambda  \\
&&+\frac{\left( b-x\right) ^{2}\left( y-c\right) ^{2}}{\left( b-a\right)
\left( d-c\right) }\int\limits_{0}^{1}\left\vert \lambda -1\right\vert \left[
\frac{-1}{\left( s+1\right) \left( s+2\right) }\left\vert \frac{\partial
^{2}f}{\partial t\partial \lambda }\left( x,\lambda y+\left( 1-\lambda
\right) c\right) \right\vert \right.  \\
&&\left. -\frac{1}{s+2}\left\vert \frac{\partial ^{2}f}{\partial t\partial
\lambda }\left( b,\lambda y+\left( 1-\lambda \right) c\right) \right\vert %
\right] d\lambda  \\
&&
\end{eqnarray*}%
\begin{eqnarray*}
&&+\frac{\left( b-x\right) ^{2}\left( d-y\right) ^{2}}{\left( b-a\right)
\left( d-c\right) }\int\limits_{0}^{1}\left\vert 1-\lambda \right\vert \left[
\frac{-1}{\left( s+1\right) \left( s+2\right) }\left\vert \frac{\partial
^{2}f}{\partial t\partial \lambda }\left( x,\lambda y+\left( 1-\lambda
\right) d\right) \right\vert \right.  \\
&&\left. -\frac{1}{s+2}\left\vert \frac{\partial ^{2}f}{\partial t\partial
\lambda }\left( b,\lambda y+\left( 1-\lambda \right) d\right) \right\vert %
\right] d\lambda .
\end{eqnarray*}%
Using co-ordinated $s-$convexity of $\left\vert \frac{\partial ^{2}f}{%
\partial t\partial \lambda }\right\vert $ again and computing all integrals,
we obtain%
\begin{eqnarray*}
&&\left\vert \frac{1}{\left( b-a\right) \left( d-c\right) }\left[ A-\left(
x-a\right) \int\limits_{c}^{d}f\left( a,v\right) dv-\left( b-x\right)
\int\limits_{c}^{d}f\left( b,v\right) dv\right. \right.  \\
&&\left. \left. -\left( d-y\right) \int\limits_{a}^{b}f\left( u,d\right)
du-\left( y-c\right) \int\limits_{a}^{b}f\left( u,c\right)
du+\int\limits_{a}^{b}\int\limits_{c}^{d}f\left( u,v\right) dudv\right]
\right\vert  \\
&\leq &\frac{\left( x-a\right) ^{2}\left( y-c\right) ^{2}}{\left( b-a\right)
\left( d-c\right) } \\
&&\times \left\{ \int\limits_{0}^{1}\left\vert \lambda -1\right\vert \left[ 
\frac{-1}{\left( s+1\right) \left( s+2\right) }\left( \lambda ^{s}\left\vert 
\frac{\partial ^{2}f}{\partial t\partial \lambda }\left( x,y\right)
\right\vert +\left( 1-\lambda \right) ^{s}\left\vert \frac{\partial ^{2}f}{%
\partial t\partial \lambda }\left( x,c\right) \right\vert \right) \right]
d\lambda \right.  \\
&&+\left. \int\limits_{0}^{1}\left\vert \lambda -1\right\vert \left[ -\frac{1%
}{s+2}\left( \lambda ^{s}\left\vert \frac{\partial ^{2}f}{\partial t\partial
\lambda }\left( a,y\right) \right\vert +\left( 1-\lambda \right)
^{s}\left\vert \frac{\partial ^{2}f}{\partial t\partial \lambda }\left(
a,c\right) \right\vert \right) \right] d\lambda \right\}  \\
&&+\frac{\left( x-a\right) ^{2}\left( d-y\right) ^{2}}{\left( b-a\right)
\left( d-c\right) } \\
&&\times \left\{ \int\limits_{0}^{1}\left\vert 1-\lambda \right\vert \left[ 
\frac{-1}{\left( s+1\right) \left( s+2\right) }\left( \lambda ^{s}\left\vert 
\frac{\partial ^{2}f}{\partial t\partial \lambda }\left( x,y\right)
\right\vert +\left( 1-\lambda \right) ^{s}\left\vert \frac{\partial ^{2}f}{%
\partial t\partial \lambda }\left( x,d\right) \right\vert \right) \right]
d\lambda \right.  \\
&&+\left. \int\limits_{0}^{1}\left\vert 1-\lambda \right\vert \left[ -\frac{1%
}{s+2}\left( \lambda ^{s}\left\vert \frac{\partial ^{2}f}{\partial t\partial
\lambda }\left( a,y\right) \right\vert +\left( 1-\lambda \right)
^{s}\left\vert \frac{\partial ^{2}f}{\partial t\partial \lambda }\left(
a,d\right) \right\vert \right) \right] d\lambda \right\}  \\
&&+\frac{\left( b-x\right) ^{2}\left( y-c\right) ^{2}}{\left( b-a\right)
\left( d-c\right) } \\
&&\times \left\{ \int\limits_{0}^{1}\left\vert \lambda -1\right\vert \left[ 
\frac{-1}{\left( s+1\right) \left( s+2\right) }\left( \lambda ^{s}\left\vert 
\frac{\partial ^{2}f}{\partial t\partial \lambda }\left( x,y\right)
\right\vert +\left( 1-\lambda \right) ^{s}\left\vert \frac{\partial ^{2}f}{%
\partial t\partial \lambda }\left( x,c\right) \right\vert \right) \right]
d\lambda \right.  \\
&&+\left. \int\limits_{0}^{1}\left\vert \lambda -1\right\vert \left[ -\frac{1%
}{s+2}\left( \lambda ^{s}\left\vert \frac{\partial ^{2}f}{\partial t\partial
\lambda }\left( b,y\right) \right\vert +\left( 1-\lambda \right)
^{s}\left\vert \frac{\partial ^{2}f}{\partial t\partial \lambda }\left(
b,c\right) \right\vert \right) \right] d\lambda \right\} 
\end{eqnarray*}%
\begin{eqnarray*}
&&+\frac{\left( b-x\right) ^{2}\left( d-y\right) ^{2}}{\left( b-a\right)
\left( d-c\right) } \\
&&\times \left\{ \int\limits_{0}^{1}\left\vert 1-\lambda \right\vert \left[ 
\frac{-1}{\left( s+1\right) \left( s+2\right) }\left( \lambda ^{s}\left\vert 
\frac{\partial ^{2}f}{\partial t\partial \lambda }\left( x,y\right)
\right\vert +\left( 1-\lambda \right) ^{s}\left\vert \frac{\partial ^{2}f}{%
\partial t\partial \lambda }\left( x,d\right) \right\vert \right) \right]
d\lambda \right.  \\
&&+\left. \int\limits_{0}^{1}\left\vert 1-\lambda \right\vert \left[ -\frac{1%
}{s+2}\left( \lambda ^{s}\left\vert \frac{\partial ^{2}f}{\partial t\partial
\lambda }\left( b,y\right) \right\vert +\left( 1-\lambda \right)
^{s}\left\vert \frac{\partial ^{2}f}{\partial t\partial \lambda }\left(
b,d\right) \right\vert \right) \right] \right\} 
\end{eqnarray*}%
then, we get%
\begin{eqnarray*}
&&\left\vert \frac{1}{\left( b-a\right) \left( d-c\right) }\left[ A-\left(
x-a\right) \int\limits_{c}^{d}f\left( a,v\right) dv-\left( b-x\right)
\int\limits_{c}^{d}f\left( b,v\right) dv\right. \right.  \\
&&\left. \left. -\left( d-y\right) \int\limits_{a}^{b}f\left( u,d\right)
du-\left( y-c\right) \int\limits_{a}^{b}f\left( u,c\right)
du+\int\limits_{a}^{b}\int\limits_{c}^{d}f\left( u,v\right) dudv\right]
\right\vert  \\
&\leq &\frac{\left( x-a\right) ^{2}\left( y-c\right) ^{2}}{\left( b-a\right)
\left( d-c\right) } \\
&&\times \left[ \frac{1}{\left( s+1\right) ^{2}\left( s+2\right) ^{2}}%
\left\vert \frac{\partial ^{2}f}{\partial t\partial \lambda }\left(
x,y\right) \right\vert +\frac{1}{\left( s+1\right) \left( s+2\right) ^{2}}%
\left\vert \frac{\partial ^{2}f}{\partial t\partial \lambda }\left(
x,c\right) \right\vert \right.  \\
&&\left. +\frac{1}{\left( s+1\right) \left( s+2\right) ^{2}}\left\vert \frac{%
\partial ^{2}f}{\partial t\partial \lambda }\left( a,y\right) \right\vert +%
\frac{1}{\left( s+2\right) ^{2}}\left\vert \frac{\partial ^{2}f}{\partial
t\partial \lambda }\left( a,c\right) \right\vert \right\}  \\
&&+\frac{\left( x-a\right) ^{2}\left( d-y\right) ^{2}}{\left( b-a\right)
\left( d-c\right) } \\
&&\times \left\{ \frac{1}{\left( s+1\right) ^{2}\left( s+2\right) ^{2}}%
\left\vert \frac{\partial ^{2}f}{\partial t\partial \lambda }\left(
x,y\right) \right\vert +\frac{1}{\left( s+1\right) \left( s+2\right) ^{2}}%
\left\vert \frac{\partial ^{2}f}{\partial t\partial \lambda }\left(
x,d\right) \right\vert \right.  \\
&&\left. +\frac{1}{\left( s+1\right) \left( s+2\right) ^{2}}\left\vert \frac{%
\partial ^{2}f}{\partial t\partial \lambda }\left( a,y\right) \right\vert +%
\frac{1}{\left( s+2\right) ^{2}}\left\vert \frac{\partial ^{2}f}{\partial
t\partial \lambda }\left( a,d\right) \right\vert \right\}  \\
&&+\frac{\left( b-x\right) ^{2}\left( y-c\right) ^{2}}{\left( b-a\right)
\left( d-c\right) } \\
&&\times \left\{ \frac{1}{\left( s+1\right) ^{2}\left( s+2\right) ^{2}}%
\left\vert \frac{\partial ^{2}f}{\partial t\partial \lambda }\left(
x,y\right) \right\vert +\frac{1}{\left( s+1\right) \left( s+2\right) ^{2}}%
\left\vert \frac{\partial ^{2}f}{\partial t\partial \lambda }\left(
x,c\right) \right\vert \right.  \\
&&\left. +\frac{1}{\left( s+1\right) \left( s+2\right) ^{2}}\left\vert \frac{%
\partial ^{2}f}{\partial t\partial \lambda }\left( b,y\right) \right\vert +%
\frac{1}{\left( s+2\right) ^{2}}\left\vert \frac{\partial ^{2}f}{\partial
t\partial \lambda }\left( b,c\right) \right\vert \right\} 
\end{eqnarray*}%
\begin{eqnarray*}
&&+\frac{\left( b-x\right) ^{2}\left( d-y\right) ^{2}}{\left( b-a\right)
\left( d-c\right) } \\
&&\times \left\{ \frac{1}{\left( s+1\right) ^{2}\left( s+2\right) ^{2}}%
\left\vert \frac{\partial ^{2}f}{\partial t\partial \lambda }\left(
x,y\right) \right\vert +\frac{1}{\left( s+1\right) \left( s+2\right) ^{2}}%
\left\vert \frac{\partial ^{2}f}{\partial t\partial \lambda }\left(
x,d\right) \right\vert \right.  \\
&&\left. +\frac{1}{\left( s+1\right) \left( s+2\right) ^{2}}\left\vert \frac{%
\partial ^{2}f}{\partial t\partial \lambda }\left( b,y\right) \right\vert +%
\frac{1}{\left( s+2\right) ^{2}}\left\vert \frac{\partial ^{2}f}{\partial
t\partial \lambda }\left( b,d\right) \right\vert \right\} .
\end{eqnarray*}%
so,%
\begin{eqnarray*}
&&\left\vert \frac{1}{\left( b-a\right) \left( d-c\right) }\left[ A-\left(
x-a\right) \int\limits_{c}^{d}f\left( a,v\right) dv-\left( b-x\right)
\int\limits_{c}^{d}f\left( b,v\right) dv\right. \right.  \\
&&\left. \left. -\left( d-y\right) \int\limits_{a}^{b}f\left( u,d\right)
du-\left( y-c\right) \int\limits_{a}^{b}f\left( u,c\right)
du+\int\limits_{a}^{b}\int\limits_{c}^{d}f\left( u,v\right) dudv\right]
\right\vert  \\
&\leq &\left[ \left( \frac{\left( \left( x-a\right) ^{2}+\left( b-x\right)
^{2}\right) \left( \left( y-c\right) ^{2}+\left( d-y\right) ^{2}\right) }{%
\left( b-a\right) \left( d-c\right) \left( s+1\right) ^{2}\left( s+2\right)
^{2}}\right) \left\vert \frac{\partial ^{2}f}{\partial t\partial s}\left(
x,y\right) \right\vert \right.  \\
&&+\left( \frac{\left( x-a\right) ^{2}\left( \left( y-c\right) ^{2}+\left(
d-y\right) ^{2}\right) }{\left( b-a\right) \left( d-c\right) \left(
s+1\right) \left( s+2\right) ^{2}}\right) \left\vert \frac{\partial ^{2}f}{%
\partial t\partial s}\left( a,y\right) \right\vert  \\
&&+\left( \frac{\left( b-x\right) ^{2}\left( \left( y-c\right) ^{2}+\left(
d-y\right) ^{2}\right) }{\left( b-a\right) \left( d-c\right) \left(
s+1\right) \left( s+2\right) ^{2}}\right) \left\vert \frac{\partial ^{2}f}{%
\partial t\partial s}\left( b,y\right) \right\vert  \\
&&+\left( \frac{\left( y-c\right) ^{2}\left( \left( x-a\right) ^{2}+\left(
b-x\right) ^{2}\right) }{\left( b-a\right) \left( d-c\right) \left(
s+1\right) \left( s+2\right) ^{2}}\right) \left\vert \frac{\partial ^{2}f}{%
\partial t\partial s}\left( x,c\right) \right\vert  \\
&&+\left( \frac{\left( d-y\right) ^{2}\left( \left( x-a\right) ^{2}+\left(
b-x\right) ^{2}\right) }{\left( b-a\right) \left( d-c\right) \left(
s+1\right) \left( s+2\right) ^{2}}\right) \left\vert \frac{\partial ^{2}f}{%
\partial t\partial s}\left( x,d\right) \right\vert  \\
&&+\frac{\left( x-a\right) ^{2}\left( y-c\right) ^{2}}{\left( b-a\right)
\left( d-c\right) \left( s+2\right) ^{2}}\left\vert \frac{\partial ^{2}f}{%
\partial t\partial s}\left( a,c\right) \right\vert +\frac{\left( x-a\right)
^{2}\left( d-y\right) ^{2}}{\left( b-a\right) \left( d-c\right) \left(
s+2\right) ^{2}}\left\vert \frac{\partial ^{2}f}{\partial t\partial s}\left(
a,d\right) \right\vert  \\
&&\left. +\frac{\left( b-x\right) ^{2}\left( y-c\right) ^{2}}{\left(
b-a\right) \left( d-c\right) \left( s+2\right) ^{2}}\left\vert \frac{%
\partial ^{2}f}{\partial t\partial s}\left( b,c\right) \right\vert +\frac{%
\left( b-x\right) ^{2}\left( d-y\right) ^{2}}{\left( b-a\right) \left(
d-c\right) \left( s+2\right) ^{2}}\left\vert \frac{\partial ^{2}f}{\partial
t\partial s}\left( b,d\right) \right\vert \right] .
\end{eqnarray*}%
Which completes the proof.
\end{proof}

\begin{corollary}
\label{c1}(1) Under the assumptions of Theorem \ref{t1}, if we choose $x=a,$ 
$y=c,$ we obtain the following inequality;%
\begin{eqnarray*}
&&\frac{1}{\left( b-a\right) \left( d-c\right) }\left\vert f\left(
b,d\right) -\left( b-a\right) \int\limits_{c}^{d}f\left( b,v\right)
dv-\left( d-c\right) \int\limits_{a}^{b}f\left( u,d\right)
du+\int\limits_{a}^{b}\int\limits_{c}^{d}f\left( u,v\right) dudv\right\vert
\\
&\leq &\frac{\left( b-a\right) \left( d-c\right) }{\left( s+2\right) ^{2}} \\
&&\times \left[ \frac{1}{\left( s+1\right) ^{2}}\left\vert \frac{\partial
^{2}f}{\partial t\partial \lambda }\left( a,c\right) \right\vert +\frac{1}{%
s+1}\left\vert \frac{\partial ^{2}f}{\partial t\partial \lambda }\left(
b,c\right) \right\vert +\frac{1}{s+1}\left\vert \frac{\partial ^{2}f}{%
\partial t\partial \lambda }\left( a,d\right) \right\vert +\left\vert \frac{%
\partial ^{2}f}{\partial t\partial \lambda }\left( b,d\right) \right\vert %
\right] .
\end{eqnarray*}%
(2) Under the assumptions of Theorem \ref{t1}, if we choose $x=b,$ $y=d,$ we
obtain the following inequality;%
\begin{eqnarray*}
&&\frac{1}{\left( b-a\right) \left( d-c\right) }\left\vert f\left(
a,c\right) -\left( b-a\right) \int\limits_{c}^{d}f\left( a,v\right)
dv-\left( d-c\right) \int\limits_{a}^{b}f\left( u,c\right)
du+\int\limits_{a}^{b}\int\limits_{c}^{d}f\left( u,v\right) dudv\right\vert
\\
&\leq &\frac{\left( b-a\right) \left( d-c\right) }{\left( s+2\right) ^{2}} \\
&&\times \left[ \frac{1}{\left( s+1\right) ^{2}}\left\vert \frac{\partial
^{2}f}{\partial t\partial \lambda }\left( b,d\right) \right\vert +\frac{1}{%
s+1}\left\vert \frac{\partial ^{2}f}{\partial t\partial \lambda }\left(
a,d\right) \right\vert +\frac{1}{s+1}\left\vert \frac{\partial ^{2}f}{%
\partial t\partial \lambda }\left( b,c\right) \right\vert +\left\vert \frac{%
\partial ^{2}f}{\partial t\partial \lambda }\left( a,c\right) \right\vert %
\right] .
\end{eqnarray*}%
(3)Under the assumptions of Theorem \ref{t1}, if we choose $x=a,$ $y=d,$ we
obtain the following inequality;%
\begin{eqnarray*}
&&\frac{1}{\left( b-a\right) \left( d-c\right) }\left\vert f\left(
b,c\right) -\left( b-a\right) \int\limits_{c}^{d}f\left( b,v\right)
dv-\left( d-c\right) \int\limits_{a}^{b}f\left( u,c\right)
du+\int\limits_{a}^{b}\int\limits_{c}^{d}f\left( u,v\right) dudv\right\vert
\\
&\leq &\frac{\left( b-a\right) \left( d-c\right) }{\left( s+2\right) ^{2}} \\
&&\times \left[ \frac{1}{\left( s+1\right) ^{2}}\left\vert \frac{\partial
^{2}f}{\partial t\partial \lambda }\left( a,d\right) \right\vert +\frac{1}{%
s+1}\left\vert \frac{\partial ^{2}f}{\partial t\partial \lambda }\left(
b,d\right) \right\vert +\frac{1}{s+1}\left\vert \frac{\partial ^{2}f}{%
\partial t\partial \lambda }\left( a,c\right) \right\vert +\left\vert \frac{%
\partial ^{2}f}{\partial t\partial \lambda }\left( b,c\right) \right\vert %
\right]
\end{eqnarray*}%
(4)Under the assumptions of Theorem \ref{t1}, if we choose $x=b,$ $y=c,$ we
obtain the following inequality;%
\begin{eqnarray*}
&&\frac{1}{\left( b-a\right) \left( d-c\right) }\left\vert f\left(
a,d\right) -\left( b-a\right) \int\limits_{c}^{d}f\left( a,v\right)
dv-\left( d-c\right) \int\limits_{a}^{b}f\left( u,d\right)
du+\int\limits_{a}^{b}\int\limits_{c}^{d}f\left( u,v\right) dudv\right\vert
\\
&\leq &\frac{\left( b-a\right) \left( d-c\right) }{\left( s+2\right) ^{2}} \\
&&\times \left[ \frac{1}{\left( s+1\right) ^{2}}\left\vert \frac{\partial
^{2}f}{\partial t\partial \lambda }\left( b,c\right) \right\vert +\frac{1}{%
s+1}\left\vert \frac{\partial ^{2}f}{\partial t\partial \lambda }\left(
a,c\right) \right\vert +\frac{1}{s+1}\left\vert \frac{\partial ^{2}f}{%
\partial t\partial \lambda }\left( b,d\right) \right\vert +\left\vert \frac{%
\partial ^{2}f}{\partial t\partial \lambda }\left( a,d\right) \right\vert %
\right]
\end{eqnarray*}
\end{corollary}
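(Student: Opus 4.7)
The plan is to obtain each of the four inequalities by a direct substitution of corner values $(x,y)\in\{(a,c),(b,d),(a,d),(b,c)\}$ into the general inequality \eqref{21} of Theorem \ref{t1}. The key observation that makes these substitutions clean is that whenever $x\in\{a,b\}$ and $y\in\{c,d\}$, exactly one of the factors $(x-a),(b-x)$ vanishes and exactly one of $(y-c),(d-y)$ vanishes, so that exactly one of the four products $(x-a)(y-c)$, $(x-a)(d-y)$, $(b-x)(y-c)$, $(b-x)(d-y)$ is nonzero and equals $(b-a)(d-c)$. Consequently, $A$ collapses to the single term corresponding to that nonvanishing product, and two of the four boundary integrals on the left-hand side of \eqref{21} drop out.

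I would write out part (1), $x=a$, $y=c$, in detail; the other three cases are entirely analogous. With this choice, $A=f(b,d)$, while $(x-a)\int_c^d f(a,v)\,dv$ and $(y-c)\int_a^b f(u,c)\,du$ vanish, and the remaining boundary integrals simplify to $(b-a)\int_c^d f(b,v)\,dv$ and $(d-c)\int_a^b f(u,d)\,du$. This reproduces the left-hand side appearing in part (1) of the corollary.

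On the right-hand side of \eqref{21}, I substitute $(x-a)=0$, $(b-x)=b-a$, $(y-c)=0$, $(d-y)=d-c$, so $(x-a)^2+(b-x)^2=(b-a)^2$ and $(y-c)^2+(d-y)^2=(d-c)^2$. The three edge terms containing a vanishing factor, namely those with $\bigl|\tfrac{\partial^2 f}{\partial t\partial\lambda}(a,y)\bigr|$ and $\bigl|\tfrac{\partial^2 f}{\partial t\partial\lambda}(x,c)\bigr|$, disappear, as do the three corner terms at $(a,c),(a,d),(b,c)$ that each carry a $(x-a)^2$ or $(y-c)^2$ factor. What survives is the centre term producing $\tfrac{1}{(s+1)^2}\bigl|\tfrac{\partial^2 f}{\partial t\partial\lambda}(a,c)\bigr|$, the two edge terms at $(b,y)=(b,c)$ and $(x,d)=(a,d)$ producing the $\tfrac{1}{s+1}$ contributions, and the single corner term producing $\bigl|\tfrac{\partial^2 f}{\partial t\partial\lambda}(b,d)\bigr|$. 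Pulling out the common factor $\tfrac{(b-a)(d-c)}{(s+2)^2}$ yields exactly inequality (1). Parts (2)--(4) follow by the same mechanism, each choice of corner isolating a different one of the four products above and thus privileging the opposite corner of $\Delta$ in the role of the $\tfrac{1}{(s+1)^2}$ coefficient.

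No new estimation, integration, or convexity argument is required here, since all the analytic content has already been packaged into Theorem \ref{t1}. The only step requiring care is the bookkeeping: correctly identifying which of the nine terms on the right-hand side of \eqref{21} survive each substitution, and checking that the surviving coefficients combine into the form stated in parts (1)--(4). I expect this bookkeeping to be the main (and essentially the only) obstacle.
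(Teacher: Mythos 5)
Your proposal is correct and is exactly the (implicit) argument of the paper: the corollary is obtained by direct substitution of the corner values into inequality (\ref{21}), with $A$ collapsing to a single corner value of $f$ and five of the nine terms on the right-hand side vanishing. The only blemish is the harmless miscount where you refer to ``three edge terms'' vanishing while listing the correct two; the surviving terms and coefficients you identify match the stated inequalities.
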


\begin{remark}
From sum of four inequalities above, we obtain;%
\begin{eqnarray}
&&\left\vert f\left( b,d\right) -\left( b-a\right)
\int\limits_{c}^{d}f\left( b,v\right) dv-\left( d-c\right)
\int\limits_{a}^{b}f\left( u,d\right)
du+\int\limits_{a}^{b}\int\limits_{c}^{d}f\left( u,v\right) dudv\right\vert
\label{c1.5} \\
&&+\left\vert f\left( a,c\right) -\left( b-a\right)
\int\limits_{c}^{d}f\left( a,v\right) dv-\left( d-c\right)
\int\limits_{a}^{b}f\left( u,c\right)
du+\int\limits_{a}^{b}\int\limits_{c}^{d}f\left( u,v\right) dudv\right\vert 
\notag \\
&&+\left\vert f\left( b,c\right) -\left( b-a\right)
\int\limits_{c}^{d}f\left( b,v\right) dv-\left( d-c\right)
\int\limits_{a}^{b}f\left( u,c\right)
du+\int\limits_{a}^{b}\int\limits_{c}^{d}f\left( u,v\right) dudv\right\vert 
\notag \\
&&+\left\vert f\left( a,d\right) -\left( b-a\right)
\int\limits_{c}^{d}f\left( a,v\right) dv-\left( d-c\right)
\int\limits_{a}^{b}f\left( u,d\right)
du+\int\limits_{a}^{b}\int\limits_{c}^{d}f\left( u,v\right) dudv\right\vert 
\notag \\
&\leq &\frac{\left( b-a\right) ^{2}\left( d-c\right) ^{2}}{\left( s+1\right)
^{2}}\times  \notag \\
&&\left[ \left\vert \frac{\partial ^{2}f}{\partial t\partial \lambda }\left(
b,d\right) \right\vert +\left\vert \frac{\partial ^{2}f}{\partial t\partial
\lambda }\left( a,c\right) \right\vert +\left\vert \frac{\partial ^{2}f}{%
\partial t\partial \lambda }\left( b,c\right) \right\vert +\left\vert \frac{%
\partial ^{2}f}{\partial t\partial \lambda }\left( a,d\right) \right\vert %
\right] .  \notag
\end{eqnarray}
\end{remark}

\begin{corollary}
Under the assumptions of Theorem \ref{t1}, if we choose $x=\frac{a+b}{2},$ $%
y=\frac{c+d}{2},$ we obtain the following inequality;%
\begin{eqnarray*}
&&\left\vert \frac{f\left( a,c\right) +f\left( a,d\right) +f\left(
b,c\right) +f\left( b,d\right) }{4\left( b-a\right) \left( d-c\right) }-%
\frac{1}{2\left( d-c\right) }\int\limits_{c}^{d}f\left( a,v\right) dv-\frac{1%
}{2\left( d-c\right) }\int\limits_{c}^{d}f\left( b,v\right) dv\right. \\
&&\left. -\frac{1}{2\left( b-a\right) }\int\limits_{a}^{b}f\left( u,d\right)
du-\frac{1}{2\left( b-a\right) }\int\limits_{a}^{b}f\left( u,c\right) du+%
\frac{1}{\left( b-a\right) \left( d-c\right) }\int\limits_{a}^{b}\int%
\limits_{c}^{d}f\left( u,v\right) dudv\right\vert \\
&\leq &\frac{\left( b-a\right) \left( d-c\right) }{4\left( s+2\right) ^{2}}%
\left[ \frac{1}{\left( s+1\right) ^{2}}\left\vert \frac{\partial ^{2}f}{%
\partial t\partial \lambda }\left( \frac{a+b}{2},\frac{c+d}{2}\right)
\right\vert \right. \\
&&+\frac{1}{2\left( s+1\right) }\left\{ \left\vert \frac{\partial ^{2}f}{%
\partial t\partial \lambda }\left( a,\frac{c+d}{2}\right) \right\vert
+\left\vert \frac{\partial ^{2}f}{\partial t\partial \lambda }\left( b,\frac{%
c+d}{2}\right) \right\vert \right\} \\
&&+\frac{1}{2\left( s+1\right) }\left\{ \left\vert \frac{\partial ^{2}f}{%
\partial t\partial \lambda }\left( \frac{a+b}{2},c\right) \right\vert
+\left\vert \frac{\partial ^{2}f}{\partial t\partial \lambda }\left( \frac{%
a+b}{2},d\right) \right\vert \right\} \\
&&\left. +\frac{1}{4}\left\{ \left\vert \frac{\partial ^{2}f}{\partial
t\partial \lambda }\left( a,c\right) \right\vert +\left\vert \frac{\partial
^{2}f}{\partial t\partial \lambda }\left( a,d\right) \right\vert +\left\vert 
\frac{\partial ^{2}f}{\partial t\partial \lambda }\left( b,c\right)
\right\vert +\left\vert \frac{\partial ^{2}f}{\partial t\partial \lambda }%
\left( b,d\right) \right\vert \right\} \right] .
\end{eqnarray*}
\end{corollary}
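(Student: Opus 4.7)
The plan is to specialize the inequality (\ref{21}) of Theorem \ref{t1} at the midpoint $x = \frac{a+b}{2}$, $y = \frac{c+d}{2}$; no new analytic ingredient is needed beyond careful algebraic bookkeeping. I would first record the symmetries $x-a = b-x = \frac{b-a}{2}$ and $y-c = d-y = \frac{d-c}{2}$, which collapse both $(x-a)^2$ and $(b-x)^2$ to $\frac{(b-a)^2}{4}$ and both $(y-c)^2$ and $(d-y)^2$ to $\frac{(d-c)^2}{4}$.

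Next, I would rewrite the left-hand side of (\ref{21}) under this substitution. Each of the four products $(x-a)(y-c)$, $(x-a)(d-y)$, $(b-x)(y-c)$, $(b-x)(d-y)$ evaluates to $\frac{(b-a)(d-c)}{4}$, so the quantity $A$ from Lemma \ref{l1} reduces to $\tfrac{1}{4}\bigl(f(a,c)+f(a,d)+f(b,c)+f(b,d)\bigr)$. Dividing by $(b-a)(d-c)$ yields the leading term $\frac{f(a,c)+f(a,d)+f(b,c)+f(b,d)}{4(b-a)(d-c)}$ of the corollary, while the four boundary-integral prefactors $(x-a),(b-x),(d-y),(y-c)$ collapse to $\frac{b-a}{2}$ or $\frac{d-c}{2}$, producing the coefficients $\frac{1}{2(d-c)}$ and $\frac{1}{2(b-a)}$ shown in the statement.

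For the right-hand side I would simplify the nine terms one by one. The coefficient of $\bigl|\tfrac{\partial^{2}f}{\partial t\,\partial \lambda}\bigl(\tfrac{a+b}{2},\tfrac{c+d}{2}\bigr)\bigr|$ comes from $((x-a)^{2}+(b-x)^{2})((y-c)^{2}+(d-y)^{2}) = \tfrac{(b-a)^{2}(d-c)^{2}}{4}$ and reduces to $\frac{(b-a)(d-c)}{4(s+1)^{2}(s+2)^{2}}$; the four ``edge'' terms evaluated at $(a,y)$, $(b,y)$, $(x,c)$, $(x,d)$ each have numerator $\tfrac{(b-a)^{2}(d-c)^{2}}{8}$ and yield $\frac{(b-a)(d-c)}{8(s+1)(s+2)^{2}}$; the four corner terms each give $\frac{(b-a)(d-c)}{16(s+2)^{2}}$. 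Factoring out the common prefactor $\frac{(b-a)(d-c)}{4(s+2)^{2}}$ leaves exactly the bracketed combination in the corollary, with $\frac{1}{(s+1)^{2}}$, $\frac{1}{2(s+1)}$, and $\frac{1}{4}$ as the interior coefficients. The only ``obstacle'' is the bookkeeping of these nine prefactors and the correct identification of the arguments $\tfrac{a+b}{2},\tfrac{c+d}{2},a,b,c,d$ across the terms; no additional convexity or integration argument is required beyond what Theorem \ref{t1} already supplies.
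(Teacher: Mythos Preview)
Your proposal is correct and matches the paper's approach: the corollary is stated as an immediate specialization of Theorem~\ref{t1} at $x=\tfrac{a+b}{2}$, $y=\tfrac{c+d}{2}$, and your algebraic bookkeeping of the left-hand side (via the reduction of $A$ and the boundary prefactors) and of the nine right-hand coefficients is exactly what that substitution entails. No additional argument beyond this direct substitution is needed, and the paper provides none.
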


\bigskip

\begin{theorem}
\label{t2}\bigskip Let $f:\Delta =\left[ a,b\right] \times \left[ c,d\right]
\rightarrow 
%TCIMACRO{\U{211d} }%
%BeginExpansion
\mathbb{R}
%EndExpansion
$ be a partial differentiable mapping on $\Delta =\left[ a,b\right] \times %
\left[ c,d\right] $ and $\frac{\partial ^{2}f}{\partial t\partial \lambda }%
\in L\left( \Delta \right) $. If $\left\vert \frac{\partial ^{2}f}{\partial
t\partial \lambda }\right\vert ^{q},$ $q>1,$ is a $s-$convex function in the
second sense on the co-ordinates on $\Delta ,$ for some fixed $s\in \left(
0,1\right] $, then the following inequality holds;%
\begin{eqnarray}
&&  \label{22} \\
&&\left\vert \frac{1}{\left( b-a\right) \left( d-c\right) }\left[ A-\left(
x-a\right) \int\limits_{c}^{d}f\left( a,v\right) dv-\left( b-x\right)
\int\limits_{c}^{d}f\left( b,v\right) dv\right. \right.  \notag \\
&&\left. \left. -\left( d-y\right) \int\limits_{a}^{b}f\left( u,d\right)
du-\left( y-c\right) \int\limits_{a}^{b}f\left( u,c\right)
du+\int\limits_{a}^{b}\int\limits_{c}^{d}f\left( u,v\right) dudv\right]
\right\vert  \notag \\
&\leq &\frac{1}{\left( p+1\right) ^{\frac{2}{p}}}\frac{1}{\left( s+1\right)
^{\frac{2}{q}}}\times  \notag \\
&&\left\{ \frac{\left( x-a\right) ^{2}\left( y-c\right) ^{2}}{\left(
b-a\right) \left( d-c\right) }\left( \left\vert \frac{\partial ^{2}f}{%
\partial t\partial \lambda }\left( x,y\right) \right\vert ^{q}+\left\vert 
\frac{\partial ^{2}f}{\partial t\partial \lambda }\left( x,c\right)
\right\vert ^{q}+\left\vert \frac{\partial ^{2}f}{\partial t\partial \lambda 
}\left( a,y\right) \right\vert ^{q}+\left\vert \frac{\partial ^{2}f}{%
\partial t\partial \lambda }\left( a,c\right) \right\vert ^{q}\right) ^{%
\frac{1}{q}}\right.  \notag \\
&&+\frac{\left( x-a\right) ^{2}\left( d-y\right) ^{2}}{\left( b-a\right)
\left( d-c\right) }\left( \left\vert \frac{\partial ^{2}f}{\partial
t\partial \lambda }\left( x,y\right) \right\vert ^{q}+\left\vert \frac{%
\partial ^{2}f}{\partial t\partial \lambda }\left( x,d\right) \right\vert
^{q}+\left\vert \frac{\partial ^{2}f}{\partial t\partial \lambda }\left(
a,y\right) \right\vert ^{q}+\left\vert \frac{\partial ^{2}f}{\partial
t\partial \lambda }\left( a,d\right) \right\vert ^{q}\right) ^{\frac{1}{q}} 
\notag \\
&&+\frac{\left( b-x\right) ^{2}\left( y-c\right) ^{2}}{\left( b-a\right)
\left( d-c\right) }\left( \left\vert \frac{\partial ^{2}f}{\partial
t\partial \lambda }\left( x,y\right) \right\vert ^{q}+\left\vert \frac{%
\partial ^{2}f}{\partial t\partial \lambda }\left( x,c\right) \right\vert
^{q}+\left\vert \frac{\partial ^{2}f}{\partial t\partial \lambda }\left(
b,y\right) \right\vert ^{q}+\left\vert \frac{\partial ^{2}f}{\partial
t\partial \lambda }\left( b,c\right) \right\vert ^{q}\right) ^{\frac{1}{q}} 
\notag \\
&&\left. +\frac{\left( b-x\right) ^{2}\left( d-y\right) ^{2}}{\left(
b-a\right) \left( d-c\right) }\left( \left\vert \frac{\partial ^{2}f}{%
\partial t\partial \lambda }\left( x,y\right) \right\vert ^{q}+\left\vert 
\frac{\partial ^{2}f}{\partial t\partial \lambda }\left( x,d\right)
\right\vert ^{q}+\left\vert \frac{\partial ^{2}f}{\partial t\partial \lambda 
}\left( b,y\right) \right\vert ^{q}+\left\vert \frac{\partial ^{2}f}{%
\partial t\partial \lambda }\left( b,d\right) \right\vert ^{q}\right) ^{%
\frac{1}{q}}\right\}  \notag
\end{eqnarray}%
where $p^{-1}+q^{-1}=1$.
\end{theorem}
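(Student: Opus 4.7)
The starting point is Lemma \ref{l1}, which expresses the quantity inside the absolute value on the left of \eqref{22} as a sum of four double integrals over $[0,1]^2$ of the form
\[
\frac{(x-a)^2(y-c)^2}{(b-a)(d-c)}\int_0^1\!\!\int_0^1(t-1)(\lambda-1)\,\frac{\partial^2 f}{\partial t\partial\lambda}(tx+(1-t)a,\,\lambda y+(1-\lambda)c)\,d\lambda\,dt,
\]
together with the three analogues obtained by swapping $a\leftrightarrow b$, $c\leftrightarrow d$, or both. First I would apply the triangle inequality to push the absolute value inside each of the four integrals, so that every integrand becomes the product of a nonnegative polynomial factor such as $(1-t)(1-\lambda)$ and $h:=|\partial^2 f/\partial t\partial\lambda|$ evaluated at the appropriate point.

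Next I would apply H\"older's inequality to each of the four resulting double integrals with conjugate exponents $p$ and $q$:
\[
\int_0^1\!\!\int_0^1(1-t)(1-\lambda)\,h\,d\lambda\,dt\le\Bigl(\int_0^1\!\!\int_0^1(1-t)^p(1-\lambda)^p\,d\lambda\,dt\Bigr)^{1/p}\Bigl(\int_0^1\!\!\int_0^1 h^q\,d\lambda\,dt\Bigr)^{1/q}.
\]
The first factor splits as $\bigl(\int_0^1(1-t)^p\,dt\bigr)\bigl(\int_0^1(1-\lambda)^p\,d\lambda\bigr)=1/(p+1)^2$, which on taking the $1/p$ power produces the constant $1/(p+1)^{2/p}$ appearing in \eqref{22}.

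For the second factor I would invoke the hypothesis that $|\partial^2 f/\partial t\partial\lambda|^q$ is $s$-convex on the coordinates. Applying the defining $s$-convexity inequality first in $t$ (with $\lambda$ fixed) and then in $\lambda$ yields the pointwise bound
\[
h^q(tx+(1-t)a,\lambda y+(1-\lambda)c)\le t^s\lambda^s h^q(x,y)+t^s(1-\lambda)^s h^q(x,c)+(1-t)^s\lambda^s h^q(a,y)+(1-t)^s(1-\lambda)^s h^q(a,c).
\]
Since $\int_0^1 t^s\,dt=\int_0^1(1-t)^s\,dt=1/(s+1)$, integrating over $[0,1]^2$ decouples the four cross-terms and the $L^q$-factor becomes
\[
\frac{1}{(s+1)^{2/q}}\bigl(h(x,y)^q+h(x,c)^q+h(a,y)^q+h(a,c)^q\bigr)^{1/q}.
\]
Repeating the argument for the other three pieces in Lemma \ref{l1} (with $a$ replaced by $b$, $c$ by $d$, or both) and adding up the four estimates reproduces \eqref{22} verbatim.

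The H\"older-norm computation and the bookkeeping across the four pieces are entirely routine. The one point that requires care is the double application of coordinated $s$-convexity: one cannot invoke the $s$-convexity definition on the joint convex combination $\bigl(tx+(1-t)a,\,\lambda y+(1-\lambda)c\bigr)$ directly, because coordinated $s$-convexity is strictly weaker than joint $s$-convexity on $\Delta$; instead one must iterate the one-variable estimate in $t$ and then in $\lambda$, which produces the four evaluation points of $h$ appearing in each line of \eqref{22}.
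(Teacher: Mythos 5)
Your proposal is correct and follows essentially the same route as the paper: apply the modulus and H\"older's inequality with exponents $p,q$ to each of the four double integrals from Lemma \ref{l1}, compute the kernel factor as $(p+1)^{-2/p}$, and bound the $L^q$ factor by iterating the coordinated $s$-convexity of $\left\vert \frac{\partial ^{2}f}{\partial t\partial \lambda }\right\vert ^{q}$ in each variable, which yields the $(s+1)^{-2/q}$ constant after integration. Your remark that the coordinate-wise convexity must be applied twice, rather than as joint $s$-convexity, is exactly how the paper carries out the estimate (\ref{m2})--(\ref{m5}).
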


\begin{proof}
From Lemma \ref{l1}, we have%
\begin{eqnarray*}
&&\left\vert \frac{1}{\left( b-a\right) \left( d-c\right) }\left[ A-\left(
x-a\right) \int\limits_{c}^{d}f\left( a,v\right) dv-\left( b-x\right)
\int\limits_{c}^{d}f\left( b,v\right) dv\right. \right. \\
&&\left. \left. -\left( d-y\right) \int\limits_{a}^{b}f\left( u,d\right)
du-\left( y-c\right) \int\limits_{a}^{b}f\left( u,c\right)
du+\int\limits_{a}^{b}\int\limits_{c}^{d}f\left( u,v\right) dudv\right]
\right\vert \\
&\leq &\frac{\left( x-a\right) ^{2}\left( y-c\right) ^{2}}{\left( b-a\right)
\left( d-c\right) }\int\limits_{0}^{1}\int\limits_{0}^{1}\left\vert \left(
t-1\right) \left( \lambda -1\right) \right\vert \left\vert \frac{\partial
^{2}f}{\partial t\partial \lambda }\left( tx+\left( 1-t\right) a,\lambda
y+\left( 1-\lambda \right) c\right) \right\vert d\lambda dt \\
&&+\frac{\left( x-a\right) ^{2}\left( d-y\right) ^{2}}{\left( b-a\right)
\left( d-c\right) }\int\limits_{0}^{1}\int\limits_{0}^{1}\left\vert \left(
t-1\right) \left( 1-\lambda \right) \right\vert \left\vert \frac{\partial
^{2}f}{\partial t\partial \lambda }\left( tx+\left( 1-t\right) a,\lambda
y+\left( 1-\lambda \right) d\right) \right\vert d\lambda dt \\
&&+\frac{\left( b-x\right) ^{2}\left( y-c\right) ^{2}}{\left( b-a\right)
\left( d-c\right) }\int\limits_{0}^{1}\int\limits_{0}^{1}\left\vert \left(
1-t\right) \left( \lambda -1\right) \right\vert \left\vert \frac{\partial
^{2}f}{\partial t\partial \lambda }\left( tx+\left( 1-t\right) b,\lambda
y+\left( 1-\lambda \right) c\right) \right\vert d\lambda dt \\
&&+\frac{\left( b-x\right) ^{2}\left( d-y\right) ^{2}}{\left( b-a\right)
\left( d-c\right) }\int\limits_{0}^{1}\int\limits_{0}^{1}\left\vert \left(
1-t\right) \left( 1-\lambda \right) \right\vert \left\vert \frac{\partial
^{2}f}{\partial t\partial \lambda }\left( tx+\left( 1-t\right) b,\lambda
y+\left( 1-\lambda \right) d\right) \right\vert d\lambda dt.
\end{eqnarray*}%
By using the well known H\"{o}lder inequality for double integrals, then one
has:%
\begin{eqnarray}
&&\left\vert \frac{1}{\left( b-a\right) \left( d-c\right) }\left[ A-\left(
x-a\right) \int\limits_{c}^{d}f\left( a,v\right) dv-\left( b-x\right)
\int\limits_{c}^{d}f\left( b,v\right) dv\right. \right.  \label{m1} \\
&&\left. \left. -\left( d-y\right) \int\limits_{a}^{b}f\left( u,d\right)
du-\left( y-c\right) \int\limits_{a}^{b}f\left( u,c\right)
du+\int\limits_{a}^{b}\int\limits_{c}^{d}f\left( u,v\right) dudv\right]
\right\vert  \notag \\
&\leq &\frac{\left( x-a\right) ^{2}\left( y-c\right) ^{2}}{\left( b-a\right)
\left( d-c\right) }\left( \int\limits_{0}^{1}\int\limits_{0}^{1}\left\vert
\left( t-1\right) \left( \lambda -1\right) \right\vert ^{p}d\lambda
dt\right) ^{\frac{1}{p}}  \notag \\
&&\left( \int\limits_{0}^{1}\int\limits_{0}^{1}\left\vert \frac{\partial
^{2}f}{\partial t\partial \lambda }\left( tx+\left( 1-t\right) a,\lambda
y+\left( 1-\lambda \right) c\right) \right\vert ^{q}d\lambda dt\right) ^{%
\frac{1}{q}}  \notag \\
&&+\frac{\left( x-a\right) ^{2}\left( d-y\right) ^{2}}{\left( b-a\right)
\left( d-c\right) }\left( \int\limits_{0}^{1}\int\limits_{0}^{1}\left\vert
\left( t-1\right) \left( 1-\lambda \right) \right\vert ^{p}d\lambda
dt\right) ^{\frac{1}{p}}  \notag \\
&&\left( \int\limits_{0}^{1}\int\limits_{0}^{1}\left\vert \frac{\partial
^{2}f}{\partial t\partial \lambda }\left( tx+\left( 1-t\right) a,\lambda
y+\left( 1-\lambda \right) d\right) \right\vert ^{q}d\lambda dt\right) ^{%
\frac{1}{q}}  \notag \\
&&+\frac{\left( b-x\right) ^{2}\left( y-c\right) ^{2}}{\left( b-a\right)
\left( d-c\right) }\left( \int\limits_{0}^{1}\int\limits_{0}^{1}\left\vert
\left( 1-t\right) \left( \lambda -1\right) \right\vert ^{p}d\lambda
dt\right) ^{\frac{1}{p}}  \notag \\
&&\left( \int\limits_{0}^{1}\int\limits_{0}^{1}\left\vert \frac{\partial
^{2}f}{\partial t\partial \lambda }\left( tx+\left( 1-t\right) b,\lambda
y+\left( 1-\lambda \right) c\right) \right\vert ^{q}d\lambda dt\right) ^{%
\frac{1}{q}}  \notag \\
&&+\frac{\left( b-x\right) ^{2}\left( d-y\right) ^{2}}{\left( b-a\right)
\left( d-c\right) }\left( \int\limits_{0}^{1}\int\limits_{0}^{1}\left\vert
\left( 1-t\right) \left( 1-\lambda \right) \right\vert ^{p}d\lambda
dt\right) ^{\frac{1}{p}}  \notag \\
&&\left( \int\limits_{0}^{1}\int\limits_{0}^{1}\left\vert \frac{\partial
^{2}f}{\partial t\partial \lambda }\left( tx+\left( 1-t\right) b,\lambda
y+\left( 1-\lambda \right) d\right) \right\vert ^{q}d\lambda dt\right) ^{%
\frac{1}{q}}  \notag
\end{eqnarray}

Since $\left\vert \frac{\partial ^{2}f}{\partial t\partial \lambda }%
\right\vert ^{q},q>1,$ is $s-$convex function in the second sense on the
co-ordinates on $\Delta $, for some fixed $s\in \left( 0,1\right] ,$ we know
that for $t\in \left[ 0,1\right] $%
\begin{eqnarray*}
&&\left\vert \frac{\partial ^{2}f}{\partial t\partial \lambda }\left(
tx+\left( 1-t\right) a,\lambda y+\left( 1-\lambda \right) c\right)
\right\vert ^{q} \\
&\leq &t^{s}\left\vert \frac{\partial ^{2}f}{\partial t\partial \lambda }%
\left( x,\lambda y+\left( 1-\lambda \right) c\right) \right\vert ^{q}+\left(
1-t\right) ^{s}\left\vert \frac{\partial ^{2}f}{\partial t\partial \lambda }%
\left( a,\lambda y+\left( 1-\lambda \right) c\right) \right\vert ^{q} \\
&\leq &t^{s}\left( \lambda ^{s}\left\vert \frac{\partial ^{2}f}{\partial
t\partial \lambda }\left( x,y\right) \right\vert ^{q}+\left( 1-\lambda
\right) ^{s}\left\vert \frac{\partial ^{2}f}{\partial t\partial \lambda }%
\left( x,c\right) \right\vert ^{q}\right) \\
&&+\left( 1-t\right) ^{s}\left( \lambda ^{s}\left\vert \frac{\partial ^{2}f}{%
\partial t\partial \lambda }\left( a,y\right) \right\vert ^{q}+\left(
1-\lambda \right) ^{s}\left\vert \frac{\partial ^{2}f}{\partial t\partial
\lambda }\left( a,c\right) \right\vert ^{q}\right)
\end{eqnarray*}%
hence, it follows that%
\begin{eqnarray}
&&  \label{m2} \\
&&\left( \int\limits_{0}^{1}\int\limits_{0}^{1}\left\vert \frac{\partial
^{2}f}{\partial t\partial \lambda }\left( tx+\left( 1-t\right) a,\lambda
y+\left( 1-\lambda \right) c\right) \right\vert ^{q}d\lambda dt\right) ^{%
\frac{1}{q}}  \notag \\
&\leq &\left( \int\limits_{0}^{1}\int\limits_{0}^{1}\left\{ t^{s}\lambda
^{s}\left\vert \frac{\partial ^{2}f}{\partial t\partial \lambda }\left(
x,y\right) \right\vert ^{q}+t^{s}\left( 1-\lambda \right) ^{s}\left\vert 
\frac{\partial ^{2}f}{\partial t\partial \lambda }\left( x,c\right)
\right\vert ^{q}\right. \right.  \notag \\
&&\left. \left. +\left( 1-t\right) ^{s}\lambda ^{s}\left\vert \frac{\partial
^{2}f}{\partial t\partial \lambda }\left( a,y\right) \right\vert ^{q}+\left(
1-t\right) ^{s}\left( 1-\lambda \right) ^{s}\left\vert \frac{\partial ^{2}f}{%
\partial t\partial \lambda }\left( a,c\right) \right\vert ^{q}\right\}
dtd\lambda \right) ^{\frac{1}{q}}  \notag \\
&=&\frac{1}{\left( s+1\right) ^{\frac{2}{q}}}\left( \left\vert \frac{%
\partial ^{2}f}{\partial t\partial \lambda }\left( x,y\right) \right\vert
^{q}+\left\vert \frac{\partial ^{2}f}{\partial t\partial \lambda }\left(
x,c\right) \right\vert ^{q}+\left\vert \frac{\partial ^{2}f}{\partial
t\partial \lambda }\left( a,y\right) \right\vert ^{q}+\left\vert \frac{%
\partial ^{2}f}{\partial t\partial \lambda }\left( a,c\right) \right\vert
^{q}\right) ^{\frac{1}{q}}  \notag
\end{eqnarray}%
A similar way for other integral, since $\left\vert \frac{\partial ^{2}f}{%
\partial t\partial \lambda }\right\vert ^{q},q>1,$ is co-ordinated $s-$%
convex function on $\Delta $, we get%
\begin{eqnarray}
&&  \label{m3} \\
&&\left( \int\limits_{0}^{1}\int\limits_{0}^{1}\left\vert \frac{\partial
^{2}f}{\partial t\partial \lambda }\left( tx+\left( 1-t\right) a,\lambda
y+\left( 1-\lambda \right) d\right) \right\vert ^{q}dsdt\right) ^{\frac{1}{q}%
}  \notag \\
&\leq &\frac{1}{\left( s+1\right) ^{\frac{2}{q}}}\left( \left\vert \frac{%
\partial ^{2}f}{\partial t\partial \lambda }\left( x,y\right) \right\vert
^{q}+\left\vert \frac{\partial ^{2}f}{\partial t\partial \lambda }\left(
x,d\right) \right\vert ^{q}+\left\vert \frac{\partial ^{2}f}{\partial
t\partial \lambda }\left( a,y\right) \right\vert ^{q}+\left\vert \frac{%
\partial ^{2}f}{\partial t\partial \lambda }\left( a,d\right) \right\vert
^{q}\right) ^{\frac{1}{q}},  \notag
\end{eqnarray}%
\begin{eqnarray}
&&  \label{m4} \\
&&\left( \int\limits_{0}^{1}\int\limits_{0}^{1}\left\vert \frac{\partial
^{2}f}{\partial t\partial s}\left( tx+\left( 1-t\right) b,\lambda y+\left(
1-\lambda \right) c\right) \right\vert ^{q}dsdt\right) ^{\frac{1}{q}}  \notag
\\
&\leq &\frac{1}{\left( s+1\right) ^{\frac{2}{q}}}\left( \left\vert \frac{%
\partial ^{2}f}{\partial t\partial \lambda }\left( x,y\right) \right\vert
^{q}+\left\vert \frac{\partial ^{2}f}{\partial t\partial \lambda }\left(
x,c\right) \right\vert ^{q}+\left\vert \frac{\partial ^{2}f}{\partial
t\partial \lambda }\left( b,y\right) \right\vert ^{q}+\left\vert \frac{%
\partial ^{2}f}{\partial t\partial \lambda }\left( b,c\right) \right\vert
^{q}\right) ^{\frac{1}{q}},  \notag
\end{eqnarray}%
\begin{eqnarray}
&&  \label{m5} \\
&&\left( \int\limits_{0}^{1}\int\limits_{0}^{1}\left\vert \frac{\partial
^{2}f}{\partial t\partial \lambda }\left( tx+\left( 1-t\right) b,\lambda
y+\left( 1-\lambda \right) d\right) \right\vert ^{q}dsdt\right) ^{\frac{1}{q}%
}  \notag \\
&\leq &\frac{1}{\left( s+1\right) ^{\frac{2}{q}}}\left( \left\vert \frac{%
\partial ^{2}f}{\partial t\partial \lambda }\left( x,y\right) \right\vert
^{q}+\left\vert \frac{\partial ^{2}f}{\partial t\partial \lambda }\left(
x,d\right) \right\vert ^{q}+\left\vert \frac{\partial ^{2}f}{\partial
t\partial \lambda }\left( b,y\right) \right\vert ^{q}+\left\vert \frac{%
\partial ^{2}f}{\partial t\partial \lambda }\left( b,d\right) \right\vert
^{q}\right) ^{\frac{1}{q}}.  \notag
\end{eqnarray}%
By the (\ref{m2})-(\ref{m5}), we get the inequality (\ref{22}).
\end{proof}

\begin{corollary}
\bigskip \label{c2}(1) Under the assumptions of Theorem \ref{t2}, if we
choose $x=a,$ $y=c,$ or $x=b,$ $y=d,$ we obtain the following inequality;%
\begin{eqnarray}
&&  \label{mt1} \\
&&\frac{1}{\left( b-a\right) \left( d-c\right) }\left\vert f\left(
b,d\right) -\left( b-a\right) \int\limits_{c}^{d}f\left( b,v\right)
dv-\left( d-c\right) \int_{a}^{b}f\left( u,d\right)
du+\int\limits_{a}^{b}\int\limits_{c}^{d}f\left( u,v\right) dudv\right\vert 
\notag \\
&\leq &\frac{\left( b-a\right) \left( d-c\right) }{\left( p+1\right) ^{\frac{%
2}{p}}\left( s+1\right) ^{\frac{2}{q}}}  \notag \\
&&\left( \left\vert \frac{\partial ^{2}f}{\partial t\partial \lambda }\left(
a,c\right) \right\vert ^{q}+\left\vert \frac{\partial ^{2}f}{\partial
t\partial \lambda }\left( a,d\right) \right\vert ^{q}+\left\vert \frac{%
\partial ^{2}f}{\partial t\partial \lambda }\left( b,c\right) \right\vert
^{q}+\left\vert \frac{\partial ^{2}f}{\partial t\partial \lambda }\left(
b,d\right) \right\vert ^{q}\right) ^{\frac{1}{q}}  \notag
\end{eqnarray}%
(2)Under the assumptions of Theorem \ref{t2}, if we choose $x=b,$ $y=d,$ we
obtain the following inequality;%
\begin{eqnarray}
&&  \label{mt2} \\
&&\frac{1}{\left( b-a\right) \left( d-c\right) }\left\vert f\left(
a,c\right) -\left( b-a\right) \int\limits_{c}^{d}f\left( a,v\right)
dv-\left( d-c\right) \int\limits_{a}^{b}f\left( u,c\right)
du+\int\limits_{a}^{b}\int\limits_{c}^{d}f\left( u,v\right) dudv\right\vert 
\notag \\
&\leq &\frac{\left( b-a\right) \left( d-c\right) }{\left( p+1\right) ^{\frac{%
2}{p}}\left( s+1\right) ^{\frac{2}{q}}}  \notag \\
&&\left( \left\vert \frac{\partial ^{2}f}{\partial t\partial \lambda }\left(
b,d\right) \right\vert ^{q}+\left\vert \frac{\partial ^{2}f}{\partial
t\partial \lambda }\left( b,c\right) \right\vert ^{q}+\left\vert \frac{%
\partial ^{2}f}{\partial t\partial \lambda }\left( a,d\right) \right\vert
^{q}+\left\vert \frac{\partial ^{2}f}{\partial t\partial \lambda }\left(
a,c\right) \right\vert ^{q}\right) ^{\frac{1}{q}}  \notag
\end{eqnarray}%
(3)Under the assumptions of Theorem \ref{t2}, if we choose $x=a,$ $y=d,$ we
obtain the following inequality;%
\begin{eqnarray}
&&  \label{mt3} \\
&&\frac{1}{\left( b-a\right) \left( d-c\right) }\left\vert f\left(
b,c\right) -\left( b-a\right) \int\limits_{c}^{d}f\left( b,v\right)
dv-\left( d-c\right) \int\limits_{a}^{b}f\left( u,c\right)
du+\int\limits_{a}^{b}\int\limits_{c}^{d}f\left( u,v\right) dudv\right\vert 
\notag \\
&\leq &\frac{\left( b-a\right) \left( d-c\right) }{\left( p+1\right) ^{\frac{%
2}{p}}\left( s+1\right) ^{\frac{2}{q}}}  \notag \\
&&\left( \left\vert \frac{\partial ^{2}f}{\partial t\partial \lambda }\left(
a,d\right) \right\vert ^{q}+\left\vert \frac{\partial ^{2}f}{\partial
t\partial \lambda }\left( a,c\right) \right\vert ^{q}+\left\vert \frac{%
\partial ^{2}f}{\partial t\partial \lambda }\left( b,d\right) \right\vert
^{q}+\left\vert \frac{\partial ^{2}f}{\partial t\partial \lambda }\left(
b,c\right) \right\vert ^{q}\right) ^{\frac{1}{q}}  \notag
\end{eqnarray}%
(4)Under the assumptions of Theorem \ref{t2}, if we choose $x=b,$ $y=c,$ we
obtain the following inequality;%
\begin{eqnarray}
&&  \label{mt4} \\
&&\frac{1}{\left( b-a\right) \left( d-c\right) }\left\vert f\left(
a,d\right) -\left( b-a\right) \int\limits_{c}^{d}f\left( a,v\right)
dv-\left( d-c\right) \int\limits_{a}^{b}f\left( u,d\right)
du+\int\limits_{a}^{b}\int\limits_{c}^{d}f\left( u,v\right) dudv\right\vert 
\notag \\
&\leq &\frac{\left( b-a\right) \left( d-c\right) }{\left( p+1\right) ^{\frac{%
2}{p}}\left( s+1\right) ^{\frac{2}{q}}}  \notag \\
&&\left( \left\vert \frac{\partial ^{2}f}{\partial t\partial \lambda }\left(
b,c\right) \right\vert ^{q}+\left\vert \frac{\partial ^{2}f}{\partial
t\partial \lambda }\left( b,d\right) \right\vert ^{q}+\left\vert \frac{%
\partial ^{2}f}{\partial t\partial \lambda }\left( a,c\right) \right\vert
^{q}+\left\vert \frac{\partial ^{2}f}{\partial t\partial \lambda }\left(
a,d\right) \right\vert ^{q}\right) ^{\frac{1}{q}}  \notag
\end{eqnarray}%
(5) Under the assumptions of Theorem \ref{t2}, if we choose $x=\frac{a+b}{2}%
, $ $y=\frac{c+d}{2},$ we obtain the following inequality;%
\begin{eqnarray*}
&&\left\vert \frac{f\left( a,c\right) +f\left( a,d\right) +f\left(
b,c\right) +f\left( b,d\right) }{4\left( b-a\right) \left( d-c\right) }-%
\frac{1}{2\left( d-c\right) }\int\limits_{c}^{d}f\left( a,v\right) dv-\frac{1%
}{2\left( d-c\right) }\int\limits_{c}^{d}f\left( b,v\right) dv\right. \\
&&\left. -\frac{1}{2\left( b-a\right) }\int\limits_{a}^{b}f\left( u,d\right)
du-\frac{1}{2\left( b-a\right) }\int\limits_{a}^{b}f\left( u,c\right) du+%
\frac{1}{\left( b-a\right) \left( d-c\right) }\int\limits_{a}^{b}\int%
\limits_{c}^{d}f\left( u,v\right) dudv\right\vert \\
&\leq &\frac{\left( b-a\right) \left( d-c\right) }{16\left( p+1\right) ^{%
\frac{2}{p}}\left( s+1\right) ^{\frac{2}{q}}}\times \\
&&\left\{ \left( \left\vert \frac{\partial ^{2}f}{\partial t\partial \lambda 
}\left( \frac{a+b}{2},\frac{c+d}{2}\right) \right\vert ^{q}+\left\vert \frac{%
\partial ^{2}f}{\partial t\partial \lambda }\left( \frac{a+b}{2},c\right)
\right\vert ^{q}+\left\vert \frac{\partial ^{2}f}{\partial t\partial \lambda 
}\left( a,\frac{c+d}{2}\right) \right\vert ^{q}+\left\vert \frac{\partial
^{2}f}{\partial t\partial \lambda }\left( a,c\right) \right\vert ^{q}\right)
^{\frac{1}{q}}\right. \\
&&\left. +\left( \left\vert \frac{\partial ^{2}f}{\partial t\partial \lambda 
}\left( \frac{a+b}{2},\frac{c+d}{2}\right) \right\vert ^{q}+\left\vert \frac{%
\partial ^{2}f}{\partial t\partial \lambda }\left( \frac{a+b}{2},d\right)
\right\vert ^{q}+\left\vert \frac{\partial ^{2}f}{\partial t\partial \lambda 
}\left( a,\frac{c+d}{2}\right) \right\vert ^{q}+\left\vert \frac{\partial
^{2}f}{\partial t\partial \lambda }\left( a,d\right) \right\vert ^{q}\right)
^{\frac{1}{q}}\right. \\
&&\left. +\left( \left\vert \frac{\partial ^{2}f}{\partial t\partial \lambda 
}\left( \frac{a+b}{2},\frac{c+d}{2}\right) \right\vert ^{q}+\left\vert \frac{%
\partial ^{2}f}{\partial t\partial \lambda }\left( \frac{a+b}{2},c\right)
\right\vert ^{q}+\left\vert \frac{\partial ^{2}f}{\partial t\partial \lambda 
}\left( b,\frac{c+d}{2}\right) \right\vert ^{q}+\left\vert \frac{\partial
^{2}f}{\partial t\partial \lambda }\left( b,c\right) \right\vert ^{q}\right)
^{\frac{1}{q}}\right. \\
&&\left. +\left( \left\vert \frac{\partial ^{2}f}{\partial t\partial \lambda 
}\left( \frac{a+b}{2},\frac{c+d}{2}\right) \right\vert ^{q}+\left\vert \frac{%
\partial ^{2}f}{\partial t\partial \lambda }\left( \frac{a+b}{2},d\right)
\right\vert ^{q}+\left\vert \frac{\partial ^{2}f}{\partial t\partial \lambda 
}\left( b,\frac{c+d}{2}\right) \right\vert ^{q}+\left\vert \frac{\partial
^{2}f}{\partial t\partial \lambda }\left( b,d\right) \right\vert ^{q}\right)
^{\frac{1}{q}}\right\} .
\end{eqnarray*}
\end{corollary}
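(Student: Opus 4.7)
The plan is to obtain each part of the corollary as a direct specialization of Theorem \ref{t2}, by plugging the prescribed corner or midpoint values of $x$ and $y$ into \eqref{22} and into the defining expression for $A$, and then observing that most of the coefficients collapse to zero or to a common value.

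First I would treat the four corner cases (1)--(4) simultaneously as a single observation. Write the four prefactors appearing in \eqref{22} as $(x-a)^2(y-c)^2$, $(x-a)^2(d-y)^2$, $(b-x)^2(y-c)^2$, $(b-x)^2(d-y)^2$. For each choice $(x,y)\in\{(a,c),(b,d),(a,d),(b,c)\}$ exactly one of these four products is nonzero (and equal to $(b-a)^2(d-c)^2$); the other three vanish. After dividing by $(b-a)(d-c)$ this gives a single term on the right with prefactor $(b-a)(d-c)$, matching the factor displayed in \eqref{mt1}--\eqref{mt4}. A parallel simplification happens in $A$: three of its four summands carry one of the vanishing factors $(x-a)$, $(y-c)$, $(b-x)$, $(d-y)$, so $A$ reduces to a single value $f(\cdot,\cdot)$ taken at the corner diagonally opposite to $(x,y)$; and two of the four boundary integrals drop out for the same reason. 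Matching which corner survives in $A$ and which of the four bracket sums of partial derivatives survives in \eqref{22} then yields \eqref{mt1}, \eqref{mt2}, \eqref{mt3}, \eqref{mt4} respectively for the substitutions $(x,y)=(a,c)$, $(b,d)$, $(a,d)$, $(b,c)$.

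For part (5), with $x=(a+b)/2$ and $y=(c+d)/2$, all four differences $(x-a),(b-x),(y-c),(d-y)$ equal $(b-a)/2$ or $(d-c)/2$. Consequently each of the four prefactors equals $(b-a)^2(d-c)^2/16$, so after the division by $(b-a)(d-c)$ every one of the four bracket sums appears with the same weight $(b-a)(d-c)/16$, producing the factor $1/[16(p+1)^{2/p}(s+1)^{2/q}]$ in the bound. Simultaneously $A$ becomes $[f(a,c)+f(a,d)+f(b,c)+f(b,d)]/4$, the boundary integrals each pick up the factor $(b-a)/2$ or $(d-c)/2$, and after dividing by $(b-a)(d-c)$ one recovers exactly the left-hand side displayed in (5).

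The main obstacle is purely bookkeeping: tracking which corner of the rectangle $(x,y)$ sits at, so that one identifies the correct surviving term among the four prefactor products, the correct surviving summand of $A$, and the correct pair of surviving boundary integrals, and then reads off the matching bracket of mixed partials $|\partial^2 f/\partial t\partial\lambda|^q$ at the four points. No new analytic inequality is needed beyond Theorem \ref{t2}.
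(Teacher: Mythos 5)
Your proposal is correct and matches the paper's (implicit) argument: the corollary is obtained exactly by substituting the stated values of $x$ and $y$ into inequality (\ref{22}) and into $A$, noting that at each corner three of the four prefactors vanish while at the midpoint all four equal $(b-a)^2(d-c)^2/16$. The bookkeeping you describe checks out against \eqref{mt1}--\eqref{mt4} and part (5).
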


\begin{remark}
\bigskip From sum of (\ref{mt1})-(\ref{mt4}), we obtain;%
\begin{eqnarray}
&&  \label{metu} \\
&&\left\vert f\left( a,c\right) -\left( b-a\right)
\int\limits_{c}^{d}f\left( a,v\right) dv-\left( d-c\right)
\int\limits_{a}^{b}f\left( u,c\right)
du+\int\limits_{a}^{b}\int\limits_{c}^{d}f\left( u,v\right) dudv\right\vert 
\notag \\
&&+\left. \left\vert f\left( a,d\right) -\left( b-a\right)
\int\limits_{c}^{d}f\left( a,v\right) dv-\left( d-c\right)
\int\limits_{a}^{b}f\left( u,d\right)
du+\int\limits_{a}^{b}\int\limits_{c}^{d}f\left( u,v\right) dudv\right\vert
\right.  \notag \\
&&+\left. \left\vert f\left( b,c\right) -\left( b-a\right)
\int\limits_{c}^{d}f\left( b,v\right) dv-\left( d-c\right)
\int\limits_{a}^{b}f\left( u,c\right)
du+\int\limits_{a}^{b}\int\limits_{c}^{d}f\left( u,v\right) dudv\right\vert
\right.  \notag \\
&&+\left\vert f\left( b,d\right) -\left( b-a\right)
\int\limits_{c}^{d}f\left( b,v\right) dv-\left( d-c\right)
\int_{a}^{b}f\left( u,d\right)
du+\int\limits_{a}^{b}\int\limits_{c}^{d}f\left( u,v\right) dudv\right\vert 
\notag \\
&\leq &\frac{4\left( b-a\right) ^{2}\left( d-c\right) ^{2}}{\left(
p+1\right) ^{\frac{2}{p}}\left( s+1\right) ^{\frac{2}{q}}}\times  \notag \\
&&\left( \left\vert \frac{\partial ^{2}f}{\partial t\partial \lambda }\left(
a,c\right) \right\vert ^{q}+\left\vert \frac{\partial ^{2}f}{\partial
t\partial \lambda }\left( a,d\right) \right\vert ^{q}+\left\vert \frac{%
\partial ^{2}f}{\partial t\partial \lambda }\left( b,c\right) \right\vert
^{q}+\left\vert \frac{\partial ^{2}f}{\partial t\partial \lambda }\left(
b,d\right) \right\vert ^{q}\right) ^{\frac{1}{q}}.  \notag
\end{eqnarray}
\end{remark}

\begin{theorem}
\label{t3}\bigskip Let $f:\Delta =\left[ a,b\right] \times \left[ c,d\right]
\rightarrow 
%TCIMACRO{\U{211d} }%
%BeginExpansion
\mathbb{R}
%EndExpansion
$ be a partial differentiable mapping on $\Delta =\left[ a,b\right] \times %
\left[ c,d\right] $ and $\frac{\partial ^{2}f}{\partial t\partial \lambda }%
\in L\left( \Delta \right) $. If $\left\vert \frac{\partial ^{2}f}{\partial
t\partial \lambda }\right\vert ^{q},$ $q\geq 1,$ is a $s-$convex function in
the second sense on the co-ordinates on $\Delta ,$ for some fixed $s\in
\left( 0,1\right] $, then the following inequality holds;%
\begin{eqnarray}
&&  \label{230} \\
&&\left\vert \frac{1}{\left( b-a\right) \left( d-c\right) }\left[ A-\left(
x-a\right) \int\limits_{c}^{d}f\left( a,v\right) dv-\left( b-x\right)
\int\limits_{c}^{d}f\left( b,v\right) dv\right. \right.   \notag \\
&&\left. \left. -\left( d-y\right) \int\limits_{a}^{b}f\left( u,d\right)
du-\left( y-c\right) \int\limits_{a}^{b}f\left( u,c\right)
du+\int\limits_{a}^{b}\int\limits_{c}^{d}f\left( u,v\right) dudv\right]
\right\vert   \notag \\
&\leq &\frac{2^{2-\frac{2}{q}}}{\left( s+1\right) ^{\frac{2}{q}}\left(
s+2\right) ^{\frac{2}{q}}}\times   \notag \\
&&\left\{ \frac{\left( x-a\right) ^{2}\left( y-c\right) ^{2}}{\left(
b-a\right) \left( d-c\right) }\left\{ \left\vert \frac{\partial ^{2}f}{%
\partial t\partial \lambda }\left( x,y\right) \right\vert ^{q}+\left(
s+1\right) \left\vert \frac{\partial ^{2}f}{\partial t\partial \lambda }%
\left( x,c\right) \right\vert ^{q}\right. \right.   \notag \\
&&+\left. \left( s+1\right) \left\vert \frac{\partial ^{2}f}{\partial
t\partial \lambda }\left( a,y\right) \right\vert ^{q}+\left( s+1\right)
^{2}\left\vert \frac{\partial ^{2}f}{\partial t\partial \lambda }\left(
a,c\right) \right\vert ^{q}\right\} ^{\frac{1}{q}}  \notag
\end{eqnarray}%
\begin{eqnarray*}
&& \\
&&+\frac{\left( x-a\right) ^{2}\left( d-y\right) ^{2}}{\left( b-a\right)
\left( d-c\right) }\left\{ \left\vert \frac{\partial ^{2}f}{\partial
t\partial \lambda }\left( x,y\right) \right\vert ^{q}+\left( s+1\right)
\left\vert \frac{\partial ^{2}f}{\partial t\partial \lambda }\left(
x,d\right) \right\vert ^{q}\right.  \\
&&+\left. \left( s+1\right) \left\vert \frac{\partial ^{2}f}{\partial
t\partial \lambda }\left( a,y\right) \right\vert ^{q}+\left( s+1\right)
^{2}\left\vert \frac{\partial ^{2}f}{\partial t\partial \lambda }\left(
a,d\right) \right\vert ^{q}\right\} ^{\frac{1}{q}} \\
&&+\frac{\left( b-x\right) ^{2}\left( y-c\right) ^{2}}{\left( b-a\right)
\left( d-c\right) }\left\{ \left\vert \frac{\partial ^{2}f}{\partial
t\partial \lambda }\left( x,y\right) \right\vert ^{q}+\left( s+1\right)
\left\vert \frac{\partial ^{2}f}{\partial t\partial \lambda }\left(
x,c\right) \right\vert ^{q}\right.  \\
&&+\left. \left( s+1\right) \left\vert \frac{\partial ^{2}f}{\partial
t\partial \lambda }\left( b,y\right) \right\vert ^{q}+\left( s+1\right)
^{2}\left\vert \frac{\partial ^{2}f}{\partial t\partial \lambda }\left(
b,c\right) \right\vert ^{q}\right\} ^{\frac{1}{q}} \\
&&+\frac{\left( b-x\right) ^{2}\left( d-y\right) ^{2}}{\left( b-a\right)
\left( d-c\right) }\left\{ \left\vert \frac{\partial ^{2}f}{\partial
t\partial \lambda }\left( x,y\right) \right\vert ^{q}+\left( s+1\right)
\left\vert \frac{\partial ^{2}f}{\partial t\partial \lambda }\left(
x,d\right) \right\vert ^{q}\right.  \\
&&\left. +\left. \left( s+1\right) \left\vert \frac{\partial ^{2}f}{\partial
t\partial \lambda }\left( b,y\right) \right\vert ^{q}+\left( s+1\right)
^{2}\left\vert \frac{\partial ^{2}f}{\partial t\partial \lambda }\left(
b,d\right) \right\vert ^{q}\right\} ^{\frac{1}{q}}\right\} .
\end{eqnarray*}
\end{theorem}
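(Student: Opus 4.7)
The plan is to retrace the structure of the proof of Theorem \ref{t2}, but to replace H\"{o}lder's inequality by the power-mean inequality, since here we only assume $q \ge 1$. Everything again starts from Lemma \ref{l1}: taking absolute values on both sides of the identity produces an upper bound consisting of four weighted double integrals, the first of which is
\[
\frac{(x-a)^{2}(y-c)^{2}}{(b-a)(d-c)}\int_{0}^{1}\!\!\int_{0}^{1}(1-t)(1-\lambda)\Bigl|\tfrac{\partial^{2}f}{\partial t\partial\lambda}(tx+(1-t)a,\lambda y+(1-\lambda)c)\Bigr|\,d\lambda\,dt,
\]
together with three analogous terms obtained by replacing the endpoints $a\leftrightarrow b$ and/or $c\leftrightarrow d$ and swapping the weights $(1-t),(1-\lambda)$ for $t,\lambda$ accordingly.

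To each of these four integrals I would apply the power-mean inequality in the weighted form
\[
\int\!\!\int \omega(t,\lambda)\,G(t,\lambda)\,d\lambda\,dt\le\Bigl(\int\!\!\int \omega\Bigr)^{1-1/q}\Bigl(\int\!\!\int \omega\, G^{q}\Bigr)^{1/q},
\]
with $\omega$ being the respective product weight $(1-t)(1-\lambda)$, $(1-t)\lambda$, $t(1-\lambda)$, or $t\lambda$. Each such weight integrates to $1/4$ over $[0,1]^{2}$, which is the source of the dimensional prefactor in front of the statement.

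The next step is to use the hypothesis that $|\partial^{2}f/\partial t\partial \lambda|^{q}$ is $s$-convex on the coordinates, applied first in the first variable and then in the second, exactly as was done in \eqref{m2}. For the first region this yields
\[
G^{q}\le t^{s}\lambda^{s}|f_{xy}|^{q}+t^{s}(1-\lambda)^{s}|f_{xc}|^{q}+(1-t)^{s}\lambda^{s}|f_{ay}|^{q}+(1-t)^{s}(1-\lambda)^{s}|f_{ac}|^{q},
\]
where $f_{uv}$ abbreviates $\partial^{2}f/\partial t\partial\lambda$ evaluated at the pair $(u,v)$. Multiplying by the weight $(1-t)(1-\lambda)$ and integrating term by term leads to the two beta-type integrals
\[
\int_{0}^{1}(1-t)\,t^{s}\,dt=\tfrac{1}{(s+1)(s+2)},\qquad \int_{0}^{1}(1-t)^{s+1}\,dt=\tfrac{1}{s+2},
\]
(and their $t\leftrightarrow 1-t$, $\lambda\leftrightarrow 1-\lambda$ reflections), which reassemble into
\[
\frac{1}{(s+1)^{2}(s+2)^{2}}\bigl[\,|f_{xy}|^{q}+(s+1)|f_{xc}|^{q}+(s+1)|f_{ay}|^{q}+(s+1)^{2}|f_{ac}|^{q}\bigr].
\]
Taking $q$-th roots then produces the bracket displayed in \eqref{230} for this region, with prefactor $\frac{1}{(s+1)^{2/q}(s+2)^{2/q}}$.

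The three remaining regions are treated identically, the only change being which corners of $\Delta$ enter through the $s$-convexity expansion (one of the corners is always preserved, while the other three are replaced by $(x,y)$ and the relevant edge evaluations $(a,y),(b,y),(x,c),(x,d)$). Reassembling the four contributions with their geometric weights $(x-a)^{2}(y-c)^{2}$, $(x-a)^{2}(d-y)^{2}$, $(b-x)^{2}(y-c)^{2}$, $(b-x)^{2}(d-y)^{2}$ gives the right-hand side of \eqref{230}. The main obstacle is purely clerical: one must keep careful track of sixteen mixed partials scattered across the four corners, the four edge evaluations $(a,y),(b,y),(x,c),(x,d)$, and the interior point $(x,y)$, and verify that in each region the beta integrals recombine into the coefficients $1,\ (s+1),\ (s+1)^{2}$ in the precise pattern displayed.
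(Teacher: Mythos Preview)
Your approach is correct and essentially identical to the paper's own proof: start from Lemma~\ref{l1}, take absolute values, apply the power-mean inequality to each of the four double integrals, expand the integrand via coordinate $s$-convexity, and evaluate the resulting beta-type integrals. One small correction worth flagging: in Lemma~\ref{l1} the kernel in \emph{all four} terms is $(1-t)(1-\lambda)$ after taking absolute values---the weights do not cycle through $t(1-\lambda)$, $(1-t)\lambda$, $t\lambda$ as you suggest---so the same pair of integrals $\int_0^1(1-t)t^s\,dt=\tfrac{1}{(s+1)(s+2)}$ and $\int_0^1(1-t)^{s+1}\,dt=\tfrac{1}{s+2}$ governs every region, and the coefficient pattern $1,\ (s+1),\ (s+1),\ (s+1)^2$ arises identically in each bracket; your first-region computation is correct, and the other three follow verbatim with only the evaluation points changed.
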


\begin{proof}
From Lemma \ref{l1}, we have%
\begin{eqnarray*}
&&\left\vert \frac{1}{\left( b-a\right) \left( d-c\right) }\left[ A-\left(
x-a\right) \int\limits_{c}^{d}f\left( a,v\right) dv-\left( b-x\right)
\int\limits_{c}^{d}f\left( b,v\right) dv\right. \right.  \\
&&\left. \left. -\left( d-y\right) \int\limits_{a}^{b}f\left( u,d\right)
du-\left( y-c\right) \int\limits_{a}^{b}f\left( u,c\right)
du+\int\limits_{a}^{b}\int\limits_{c}^{d}f\left( u,v\right) dudv\right]
\right\vert  \\
&\leq &\frac{\left( x-a\right) ^{2}\left( y-c\right) ^{2}}{\left( b-a\right)
\left( d-c\right) }\int\limits_{0}^{1}\int\limits_{0}^{1}\left\vert \left(
t-1\right) \left( \lambda -1\right) \right\vert \left\vert \frac{\partial
^{2}f}{\partial t\partial \lambda }\left( tx+\left( 1-t\right) a,\lambda
y+\left( 1-\lambda \right) c\right) \right\vert d\lambda dt \\
&&+\frac{\left( x-a\right) ^{2}\left( d-y\right) ^{2}}{\left( b-a\right)
\left( d-c\right) }\int\limits_{0}^{1}\int\limits_{0}^{1}\left\vert \left(
t-1\right) \left( 1-\lambda \right) \right\vert \left\vert \frac{\partial
^{2}f}{\partial t\partial \lambda }\left( tx+\left( 1-t\right) a,\lambda
y+\left( 1-\lambda \right) d\right) \right\vert d\lambda dt \\
&&+\frac{\left( b-x\right) ^{2}\left( y-c\right) ^{2}}{\left( b-a\right)
\left( d-c\right) }\int\limits_{0}^{1}\int\limits_{0}^{1}\left\vert \left(
1-t\right) \left( \lambda -1\right) \right\vert \left\vert \frac{\partial
^{2}f}{\partial t\partial \lambda }\left( tx+\left( 1-t\right) b,\lambda
y+\left( 1-\lambda \right) c\right) \right\vert d\lambda dt \\
&&+\frac{\left( b-x\right) ^{2}\left( d-y\right) ^{2}}{\left( b-a\right)
\left( d-c\right) }\int\limits_{0}^{1}\int\limits_{0}^{1}\left\vert \left(
1-t\right) \left( 1-\lambda \right) \right\vert \left\vert \frac{\partial
^{2}f}{\partial t\partial \lambda }\left( tx+\left( 1-t\right) b,\lambda
y+\left( 1-\lambda \right) d\right) \right\vert d\lambda dt.
\end{eqnarray*}%
By using the well known power mean inequality for double integrals, $%
f:\Delta \rightarrow 
%TCIMACRO{\U{211d} }%
%BeginExpansion
\mathbb{R}
%EndExpansion
$ is co-ordinated $s-$convex on $\Delta ,$ then one has:%
\begin{eqnarray}
&&\left\vert \frac{1}{\left( b-a\right) \left( d-c\right) }\left[ A-\left(
x-a\right) \int\limits_{c}^{d}f\left( a,v\right) dv-\left( b-x\right)
\int\limits_{c}^{d}f\left( b,v\right) dv\right. \right.   \label{24} \\
&&\left. \left. -\left( d-y\right) \int\limits_{a}^{b}f\left( u,d\right)
du-\left( y-c\right) \int\limits_{a}^{b}f\left( u,c\right)
du+\int\limits_{a}^{b}\int\limits_{c}^{d}f\left( u,v\right) dudv\right]
\right\vert   \notag \\
&\leq &\frac{\left( x-a\right) ^{2}\left( y-c\right) ^{2}}{\left( b-a\right)
\left( d-c\right) }\left( \int\limits_{0}^{1}\int\limits_{0}^{1}\left\vert
\left( t-1\right) \left( \lambda -1\right) \right\vert d\lambda dt\right)
^{1-\frac{1}{q}}\times   \notag \\
&&\left( \int\limits_{0}^{1}\int\limits_{0}^{1}\left\vert \left( t-1\right)
\left( \lambda -1\right) \right\vert \left\vert \frac{\partial ^{2}f}{%
\partial t\partial \lambda }\left( tx+\left( 1-t\right) a,\lambda y+\left(
1-\lambda \right) c\right) \right\vert ^{q}d\lambda dt\right) ^{\frac{1}{q}}
\notag \\
&&+\frac{\left( x-a\right) ^{2}\left( d-y\right) ^{2}}{\left( b-a\right)
\left( d-c\right) }\left( \int\limits_{0}^{1}\int\limits_{0}^{1}\left\vert
\left( t-1\right) \left( 1-\lambda \right) \right\vert d\lambda dt\right)
^{1-\frac{1}{q}}\times   \notag \\
&&\left( \int\limits_{0}^{1}\int\limits_{0}^{1}\left\vert \left( t-1\right)
\left( 1-\lambda \right) \right\vert \left\vert \frac{\partial ^{2}f}{%
\partial t\partial \lambda }\left( tx+\left( 1-t\right) a,\lambda y+\left(
1-\lambda \right) d\right) \right\vert ^{q}d\lambda dt\right) ^{\frac{1}{q}}
\notag \\
&&+\frac{\left( b-x\right) ^{2}\left( y-c\right) ^{2}}{\left( b-a\right)
\left( d-c\right) }\left( \int\limits_{0}^{1}\int\limits_{0}^{1}\left\vert
\left( 1-t\right) \left( \lambda -1\right) \right\vert d\lambda dt\right)
^{1-\frac{1}{q}}\times   \notag \\
&&\left( \int\limits_{0}^{1}\int\limits_{0}^{1}\left\vert \left( 1-t\right)
\left( \lambda -1\right) \right\vert \left\vert \frac{\partial ^{2}f}{%
\partial t\partial \lambda }\left( tx+\left( 1-t\right) b,\lambda y+\left(
1-\lambda \right) c\right) \right\vert ^{q}d\lambda dt\right) ^{\frac{1}{q}}
\notag \\
&&+\frac{\left( b-x\right) ^{2}\left( d-y\right) ^{2}}{\left( b-a\right)
\left( d-c\right) }\left( \int\limits_{0}^{1}\int\limits_{0}^{1}\left\vert
\left( 1-t\right) \left( 1-\lambda \right) \right\vert d\lambda dt\right)
^{1-\frac{1}{q}}\times   \notag \\
&&\left( \int\limits_{0}^{1}\int\limits_{0}^{1}\left\vert \left( 1-t\right)
\left( 1-\lambda \right) \right\vert \left\vert \frac{\partial ^{2}f}{%
\partial t\partial \lambda }\left( tx+\left( 1-t\right) b,\lambda y+\left(
1-\lambda \right) d\right) \right\vert ^{q}d\lambda dt\right) ^{\frac{1}{q}}
\notag
\end{eqnarray}

Since $\left\vert \frac{\partial ^{2}f}{\partial t\partial \lambda }%
\right\vert ^{q}$ is $s-$convex function in the second sense on the
co-ordinates on $\Delta $, for some fixed $s\in \left( 0,1\right] ,$ we know
that for $t\in \left[ 0,1\right] $%
\begin{eqnarray*}
&&\left\vert \frac{\partial ^{2}f}{\partial t\partial \lambda }\left(
tx+\left( 1-t\right) a,\lambda y+\left( 1-\lambda \right) c\right)
\right\vert ^{q} \\
&\leq &t^{s}\left\vert \frac{\partial ^{2}f}{\partial t\partial \lambda }%
\left( x,\lambda y+\left( 1-\lambda \right) c\right) \right\vert ^{q}+\left(
1-t\right) ^{s}\left\vert \frac{\partial ^{2}f}{\partial t\partial \lambda }%
\left( a,\lambda y+\left( 1-\lambda \right) c\right) \right\vert ^{q}
\end{eqnarray*}%
and%
\begin{eqnarray*}
&&\left\vert \frac{\partial ^{2}f}{\partial t\partial \lambda }\left(
tx+\left( 1-t\right) a,\lambda y+\left( 1-\lambda \right) c\right)
\right\vert ^{q} \\
&\leq &t^{s}\lambda ^{s}\left\vert \frac{\partial ^{2}f}{\partial t\partial
\lambda }\left( x,y\right) \right\vert ^{q}+t^{s}\left( 1-\lambda \right)
^{s}\left\vert \frac{\partial ^{2}f}{\partial t\partial \lambda }\left(
x,c\right) \right\vert ^{q} \\
&&+\left( 1-t\right) ^{s}\lambda ^{s}\left\vert \frac{\partial ^{2}f}{%
\partial t\partial \lambda }\left( a,y\right) \right\vert ^{q}+\left(
1-t\right) ^{s}\left( 1-\lambda \right) ^{s}\left\vert \frac{\partial ^{2}f}{%
\partial t\partial \lambda }\left( a,c\right) \right\vert ^{q}
\end{eqnarray*}%
hence,\ it follows that%
\begin{eqnarray}
&&  \label{231} \\
&&\left( \int\limits_{0}^{1}\int\limits_{0}^{1}\left\vert \left( t-1\right)
\left( \lambda -1\right) \right\vert \left\vert \frac{\partial ^{2}f}{%
\partial t\partial \lambda }\left( tx+\left( 1-t\right) a,\lambda y+\left(
1-\lambda \right) c\right) \right\vert ^{q}d\lambda dt\right) ^{\frac{1}{q}}
\notag \\
&\leq &\left( \int\limits_{0}^{1}\int\limits_{0}^{1}\left\{ \left\vert
\left( t-1\right) \left( \lambda -1\right) \right\vert t^{s}\lambda
^{s}\left\vert \frac{\partial ^{2}f}{\partial t\partial \lambda }\left(
x,y\right) \right\vert ^{q}+\left\vert \left( t-1\right) \left( \lambda
-1\right) \right\vert t^{s}\left( 1-\lambda \right) ^{s}\left\vert \frac{%
\partial ^{2}f}{\partial t\partial \lambda }\left( x,c\right) \right\vert
^{q}\right. \right.  \notag \\
&&\left. \left. +\left\vert \left( t-1\right) \left( \lambda -1\right)
\right\vert \left( 1-t\right) ^{s}\lambda ^{s}\left\vert \frac{\partial ^{2}f%
}{\partial t\partial \lambda }\left( a,y\right) \right\vert ^{q}+\left\vert
\left( t-1\right) \left( \lambda -1\right) \right\vert \left( 1-t\right)
^{s}\left( 1-\lambda \right) ^{s}\left\vert \frac{\partial ^{2}f}{\partial
t\partial \lambda }\left( a,c\right) \right\vert ^{q}\right\} dtd\lambda
\right) ^{\frac{1}{q}}  \notag \\
&=&\left\{ \frac{1}{\left( s+1\right) ^{2}\left( s+2\right) ^{2}}\left\vert 
\frac{\partial ^{2}f}{\partial t\partial \lambda }\left( x,y\right)
\right\vert ^{q}+\frac{1}{\left( s+1\right) \left( s+2\right) ^{2}}%
\left\vert \frac{\partial ^{2}f}{\partial t\partial \lambda }\left(
x,c\right) \right\vert ^{q}\right.  \notag \\
&&+\left. \frac{1}{\left( s+1\right) \left( s+2\right) ^{2}}\left\vert \frac{%
\partial ^{2}f}{\partial t\partial \lambda }\left( a,y\right) \right\vert
^{q}+\frac{1}{\left( s+2\right) ^{2}}\left\vert \frac{\partial ^{2}f}{%
\partial t\partial \lambda }\left( a,c\right) \right\vert ^{q}\right\} ^{%
\frac{1}{q}}  \notag \\
&=&\frac{1}{\left( s+1\right) ^{\frac{2}{q}}\left( s+2\right) ^{\frac{2}{q}}}%
\left\{ \left\vert \frac{\partial ^{2}f}{\partial t\partial \lambda }\left(
x,y\right) \right\vert ^{q}+\left( s+1\right) \left\vert \frac{\partial ^{2}f%
}{\partial t\partial \lambda }\left( x,c\right) \right\vert ^{q}\right. 
\notag \\
&&+\left. \left( s+1\right) \left\vert \frac{\partial ^{2}f}{\partial
t\partial \lambda }\left( a,y\right) \right\vert ^{q}+\left( s+1\right)
^{2}\left\vert \frac{\partial ^{2}f}{\partial t\partial \lambda }\left(
a,c\right) \right\vert ^{q}\right\} ^{\frac{1}{q}}  \notag
\end{eqnarray}%
A similar way for other integral, since $\left\vert \frac{\partial ^{2}f}{%
\partial t\partial \lambda }\right\vert ^{q}$ is co-ordinated $s-$convex
function in the second sense on $\Delta $, we get%
\begin{eqnarray}
&&\left( \int\limits_{0}^{1}\int\limits_{0}^{1}\left\vert \left( t-1\right)
\left( 1-\lambda \right) \right\vert \left\vert \frac{\partial ^{2}f}{%
\partial t\partial \lambda }\left( tx+\left( 1-t\right) a,\lambda y+\left(
1-\lambda \right) d\right) \right\vert ^{q}dsdt\right) ^{\frac{1}{q}}
\label{232} \\
&\leq &\frac{1}{\left( s+1\right) ^{\frac{2}{q}}\left( s+2\right) ^{\frac{2}{%
q}}}\left\{ \left\vert \frac{\partial ^{2}f}{\partial t\partial \lambda }%
\left( x,y\right) \right\vert ^{q}+\left( s+1\right) \left\vert \frac{%
\partial ^{2}f}{\partial t\partial \lambda }\left( x,d\right) \right\vert
^{q}\right.  \notag \\
&&+\left. \left( s+1\right) \left\vert \frac{\partial ^{2}f}{\partial
t\partial \lambda }\left( a,y\right) \right\vert ^{q}+\left( s+1\right)
^{2}\left\vert \frac{\partial ^{2}f}{\partial t\partial \lambda }\left(
a,d\right) \right\vert ^{q}\right\} ^{\frac{1}{q}},  \notag
\end{eqnarray}%
\begin{eqnarray}
&&\left( \int\limits_{0}^{1}\int\limits_{0}^{1}\left\vert \left( 1-t\right)
\left( \lambda -1\right) \right\vert \left\vert \frac{\partial ^{2}f}{%
\partial t\partial s}\left( tx+\left( 1-t\right) b,\lambda y+\left(
1-\lambda \right) c\right) \right\vert ^{q}dsdt\right) ^{\frac{1}{q}}
\label{233} \\
&\leq &\frac{1}{\left( s+1\right) ^{\frac{2}{q}}\left( s+2\right) ^{\frac{2}{%
q}}}\left\{ \left\vert \frac{\partial ^{2}f}{\partial t\partial \lambda }%
\left( x,y\right) \right\vert ^{q}+\left( s+1\right) \left\vert \frac{%
\partial ^{2}f}{\partial t\partial \lambda }\left( x,c\right) \right\vert
^{q}\right.  \notag \\
&&+\left. \left( s+1\right) \left\vert \frac{\partial ^{2}f}{\partial
t\partial \lambda }\left( b,y\right) \right\vert ^{q}+\left( s+1\right)
^{2}\left\vert \frac{\partial ^{2}f}{\partial t\partial \lambda }\left(
b,c\right) \right\vert ^{q}\right\} ^{\frac{1}{q}},  \notag
\end{eqnarray}%
\begin{eqnarray}
&&\left( \int\limits_{0}^{1}\int\limits_{0}^{1}\left\vert \left( 1-t\right)
\left( 1-\lambda \right) \right\vert \left\vert \frac{\partial ^{2}f}{%
\partial t\partial \lambda }\left( tx+\left( 1-t\right) b,\lambda y+\left(
1-\lambda \right) d\right) \right\vert ^{q}dsdt\right) ^{\frac{1}{q}}
\label{234} \\
&\leq &\frac{1}{\left( s+1\right) ^{\frac{2}{q}}\left( s+2\right) ^{\frac{2}{%
q}}}\left\{ \left\vert \frac{\partial ^{2}f}{\partial t\partial \lambda }%
\left( x,y\right) \right\vert ^{q}+\left( s+1\right) \left\vert \frac{%
\partial ^{2}f}{\partial t\partial \lambda }\left( x,d\right) \right\vert
^{q}\right.  \notag \\
&&+\left. \left( s+1\right) \left\vert \frac{\partial ^{2}f}{\partial
t\partial \lambda }\left( b,y\right) \right\vert ^{q}+\left( s+1\right)
^{2}\left\vert \frac{\partial ^{2}f}{\partial t\partial \lambda }\left(
b,d\right) \right\vert ^{q}\right\} ^{\frac{1}{q}}.  \notag
\end{eqnarray}%
By the (\ref{231})-(\ref{234}), we get the inequality (\ref{230}).
\end{proof}

\begin{corollary}
\bigskip \label{c3}(1) Under the assumptions of Theorem \ref{t3}, if we
choose $x=a,$ $y=c,$ or $x=b,$ $y=d,$ we obtain the following inequality;%
\begin{eqnarray}
&&  \label{c31} \\
&&\frac{1}{\left( b-a\right) \left( d-c\right) }\left\vert f\left(
b,d\right) -\left( b-a\right) \int\limits_{c}^{d}f\left( b,v\right)
dv-\left( d-c\right) \int_{a}^{b}f\left( u,d\right)
du+\int\limits_{a}^{b}\int\limits_{c}^{d}f\left( u,v\right) dudv\right\vert 
\notag \\
&\leq &\frac{2^{2-\frac{2}{q}}\left( b-a\right) \left( d-c\right) }{\left(
s+1\right) ^{\frac{2}{q}}\left( s+2\right) ^{\frac{2}{q}}}\left\{ \left\vert 
\frac{\partial ^{2}f}{\partial t\partial \lambda }\left( a,c\right)
\right\vert ^{q}+\left( s+1\right) \left\vert \frac{\partial ^{2}f}{\partial
t\partial \lambda }\left( a,d\right) \right\vert ^{q}\right.  \notag \\
&&+\left. \left( s+1\right) \left\vert \frac{\partial ^{2}f}{\partial
t\partial \lambda }\left( b,c\right) \right\vert ^{q}+\left( s+1\right)
^{2}\left\vert \frac{\partial ^{2}f}{\partial t\partial \lambda }\left(
b,d\right) \right\vert ^{q}\right\} ^{\frac{1}{q}}.  \notag
\end{eqnarray}%
(2)Under the assumptions of Theorem \ref{t3}, if we choose $x=b,$ $y=d,$ we
obtain the following inequality;%
\begin{eqnarray}
&&  \label{c32} \\
&&\frac{1}{\left( b-a\right) \left( d-c\right) }\left\vert f\left(
a,c\right) -\left( b-a\right) \int\limits_{c}^{d}f\left( a,v\right)
dv-\left( d-c\right) \int\limits_{a}^{b}f\left( u,c\right)
du+\int\limits_{a}^{b}\int\limits_{c}^{d}f\left( u,v\right) dudv\right\vert 
\notag \\
&\leq &\frac{2^{2-\frac{2}{q}}\left( b-a\right) \left( d-c\right) }{\left(
s+1\right) ^{\frac{2}{q}}\left( s+2\right) ^{\frac{2}{q}}}\left\{ \left\vert 
\frac{\partial ^{2}f}{\partial t\partial \lambda }\left( b,d\right)
\right\vert ^{q}+\left( s+1\right) \left\vert \frac{\partial ^{2}f}{\partial
t\partial \lambda }\left( b,c\right) \right\vert ^{q}\right.  \notag \\
&&\left. +\left( s+1\right) \left\vert \frac{\partial ^{2}f}{\partial
t\partial \lambda }\left( a,d\right) \right\vert ^{q}+\left( s+1\right)
^{2}\left\vert \frac{\partial ^{2}f}{\partial t\partial \lambda }\left(
a,c\right) \right\vert ^{q}\right\} ^{\frac{1}{q}}  \notag
\end{eqnarray}%
(3)Under the assumptions of Theorem \ref{t3}, if we choose $x=a,$ $y=d,$ we
obtain the following inequality;%
\begin{eqnarray}
&&  \label{c33} \\
&&\frac{1}{\left( b-a\right) \left( d-c\right) }\left\vert f\left(
b,c\right) -\left( b-a\right) \int\limits_{c}^{d}f\left( b,v\right)
dv-\left( d-c\right) \int\limits_{a}^{b}f\left( u,c\right)
du+\int\limits_{a}^{b}\int\limits_{c}^{d}f\left( u,v\right) dudv\right\vert 
\notag \\
&\leq &\frac{2^{2-\frac{2}{q}}\left( b-a\right) \left( d-c\right) }{\left(
s+1\right) ^{\frac{2}{q}}\left( s+2\right) ^{\frac{2}{q}}}\left\{ \left\vert 
\frac{\partial ^{2}f}{\partial t\partial \lambda }\left( a,d\right)
\right\vert ^{q}+\left( s+1\right) \left\vert \frac{\partial ^{2}f}{\partial
t\partial \lambda }\left( a,c\right) \right\vert ^{q}\right.  \notag \\
&&\left. +\left( s+1\right) \left\vert \frac{\partial ^{2}f}{\partial
t\partial \lambda }\left( b,d\right) \right\vert ^{q}+\left( s+1\right)
^{2}\left\vert \frac{\partial ^{2}f}{\partial t\partial \lambda }\left(
b,c\right) \right\vert ^{q}\right\} ^{\frac{1}{q}}  \notag
\end{eqnarray}%
(4)Under the assumptions of Theorem \ref{t3}, if we choose $x=b,$ $y=c,$ we
obtain the following inequality;%
\begin{eqnarray}
&&  \label{c34} \\
&&\frac{1}{\left( b-a\right) \left( d-c\right) }\left\vert f\left(
a,d\right) -\left( b-a\right) \int\limits_{c}^{d}f\left( a,v\right)
dv-\left( d-c\right) \int\limits_{a}^{b}f\left( u,d\right)
du+\int\limits_{a}^{b}\int\limits_{c}^{d}f\left( u,v\right) dudv\right\vert 
\notag \\
&\leq &\frac{2^{2-\frac{2}{q}}\left( b-a\right) \left( d-c\right) }{\left(
s+1\right) ^{\frac{2}{q}}\left( s+2\right) ^{\frac{2}{q}}}\left\{ \left\vert 
\frac{\partial ^{2}f}{\partial t\partial \lambda }\left( b,c\right)
\right\vert ^{q}+\left( s+1\right) \left\vert \frac{\partial ^{2}f}{\partial
t\partial \lambda }\left( b,d\right) \right\vert ^{q}\right.  \notag \\
&&\left. +\left( s+1\right) \left\vert \frac{\partial ^{2}f}{\partial
t\partial \lambda }\left( a,c\right) \right\vert ^{q}+\left( s+1\right)
^{2}\left\vert \frac{\partial ^{2}f}{\partial t\partial \lambda }\left(
a,d\right) \right\vert ^{q}\right\} ^{\frac{1}{q}}  \notag
\end{eqnarray}%
(5) Under the assumptions of Theorem \ref{t3}, if we choose $x=\frac{a+b}{2}%
, $ $y=\frac{c+d}{2},$ we obtain the following inequality;%
\begin{eqnarray*}
&&\left\vert \frac{f\left( a,c\right) +f\left( a,d\right) +f\left(
b,c\right) +f\left( b,d\right) }{4\left( b-a\right) \left( d-c\right) }-%
\frac{1}{2\left( d-c\right) }\int\limits_{c}^{d}f\left( a,v\right) dv-\frac{1%
}{2\left( d-c\right) }\int\limits_{c}^{d}f\left( b,v\right) dv\right. \\
&&\left. -\frac{1}{2\left( b-a\right) }\int\limits_{a}^{b}f\left( u,d\right)
du-\frac{1}{2\left( b-a\right) }\int\limits_{a}^{b}f\left( u,c\right) du+%
\frac{1}{\left( b-a\right) \left( d-c\right) }\int\limits_{a}^{b}\int%
\limits_{c}^{d}f\left( u,v\right) dudv\right\vert \\
&\leq &\frac{\left( b-a\right) \left( d-c\right) }{4\left( 2\left(
s+1\right) \left( s+2\right) \right) ^{\frac{2}{q}}}\times \\
&&\left\{ \left( \left\vert \frac{\partial ^{2}f}{\partial t\partial \lambda 
}\left( \frac{a+b}{2},\frac{c+d}{2}\right) \right\vert ^{q}+\left(
s+1\right) \left\vert \frac{\partial ^{2}f}{\partial t\partial \lambda }%
\left( \frac{a+b}{2},c\right) \right\vert ^{q}\right. \right. \\
&&\left. +\left( s+1\right) \left\vert \frac{\partial ^{2}f}{\partial
t\partial \lambda }\left( a,\frac{c+d}{2}\right) \right\vert ^{q}+\left(
s+1\right) ^{2}\left\vert \frac{\partial ^{2}f}{\partial t\partial \lambda }%
\left( a,c\right) \right\vert ^{q}\right) ^{\frac{1}{q}} \\
&&+\left( \left\vert \frac{\partial ^{2}f}{\partial t\partial \lambda }%
\left( \frac{a+b}{2},\frac{c+d}{2}\right) \right\vert ^{q}+\left( s+1\right)
\left\vert \frac{\partial ^{2}f}{\partial t\partial \lambda }\left( \frac{a+b%
}{2},d\right) \right\vert ^{q}\right. \\
&&\left. +\left( s+1\right) \left\vert \frac{\partial ^{2}f}{\partial
t\partial \lambda }\left( a,\frac{c+d}{2}\right) \right\vert ^{q}+\left(
s+1\right) ^{2}\left\vert \frac{\partial ^{2}f}{\partial t\partial \lambda }%
\left( a,d\right) \right\vert ^{q}\right) ^{\frac{1}{q}} \\
&&+\left( \left\vert \frac{\partial ^{2}f}{\partial t\partial \lambda }%
\left( \frac{a+b}{2},\frac{c+d}{2}\right) \right\vert ^{q}+\left( s+1\right)
\left\vert \frac{\partial ^{2}f}{\partial t\partial \lambda }\left( \frac{a+b%
}{2},c\right) \right\vert ^{q}\right. \\
&&\left. +\left( s+1\right) \left\vert \frac{\partial ^{2}f}{\partial
t\partial \lambda }\left( b,\frac{c+d}{2}\right) \right\vert ^{q}+\left(
s+1\right) ^{2}\left\vert \frac{\partial ^{2}f}{\partial t\partial \lambda }%
\left( b,c\right) \right\vert ^{q}\right) ^{\frac{1}{q}} \\
&&+\left[ \left\vert \frac{\partial ^{2}f}{\partial t\partial \lambda }%
\left( \frac{a+b}{2},\frac{c+d}{2}\right) \right\vert ^{q}+\left( s+1\right)
\left\vert \frac{\partial ^{2}f}{\partial t\partial \lambda }\left( \frac{a+b%
}{2},d\right) \right\vert ^{q}\right. . \\
&&\left. \left. +\left( s+1\right) \left\vert \frac{\partial ^{2}f}{\partial
t\partial \lambda }\left( b,\frac{c+d}{2}\right) \right\vert ^{q}+\left(
s+1\right) ^{2}\left\vert \frac{\partial ^{2}f}{\partial t\partial \lambda }%
\left( b,d\right) \right\vert ^{q}\right) ^{\frac{1}{q}}\right\}
\end{eqnarray*}
\end{corollary}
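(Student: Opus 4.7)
The plan is to deduce all five inequalities as immediate specialisations of the inequality \eqref{230} of Theorem \ref{t3} by substituting the prescribed values of $(x,y)$. The key observation is that the four summands on the right of \eqref{230} carry the weights $(x-a)^{2}(y-c)^{2}$, $(x-a)^{2}(d-y)^{2}$, $(b-x)^{2}(y-c)^{2}$, $(b-x)^{2}(d-y)^{2}$ respectively; the four terms defining $A$ carry the weights $(x-a)(y-c)$, $(x-a)(d-y)$, $(b-x)(y-c)$, $(b-x)(d-y)$; and the four boundary integrals are prefixed by $(x-a)$, $(b-x)$, $(d-y)$, $(y-c)$. Forcing $(x,y)$ to be a corner of $\Delta$ makes two of these linear factors vanish, which kills three of the four summands on the right, collapses $A$ to $f$ evaluated at the diagonally opposite corner, and eliminates two of the four single integrals.

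For part (1) I would substitute $x=a$, $y=c$, leaving only the summand of weight $(b-a)^{2}(d-c)^{2}$; dividing by $(b-a)(d-c)$ returns the factor $(b-a)(d-c)$ in front, and the four $\bigl|\tfrac{\partial^{2}f}{\partial t\,\partial\lambda}\bigr|^{q}$ values specialise to the nodes $(a,c), (a,d), (b,c), (b,d)$ with coefficients $1, s+1, s+1, (s+1)^{2}$, reproducing \eqref{c31} verbatim. Parts (2), (3), (4) are structurally identical, using the opposite corner $(b,d)$, $(a,d)$, $(b,c)$ respectively as the surviving node; in each case three summands vanish mechanically, $A$ collapses to the single opposite-corner value, two line integrals drop out, and the bookkeeping is pure substitution, producing \eqref{c32}--\eqref{c34}.

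Part (5) is the only one in which no summand vanishes: setting $x=(a+b)/2$, $y=(c+d)/2$ makes every weight $(x-a)^{2}(y-c)^{2}, \dots, (b-x)^{2}(d-y)^{2}$ equal to $(b-a)^{2}(d-c)^{2}/16$, so after dividing by $(b-a)(d-c)$ each of the four summands contributes the common factor $(b-a)(d-c)/16$. Combining this with the Theorem \ref{t3} prefactor $2^{2-2/q}/[(s+1)(s+2)]^{2/q}$ yields $(b-a)(d-c)/[4(2(s+1)(s+2))^{2/q}]$, using the elementary identity $2^{2-2/q}/16 = 1/(4\cdot 2^{2/q})$. On the left, $A$ collapses to $[f(a,c)+f(a,d)+f(b,c)+f(b,d)]/4$ and the four boundary integrals pair up with the coefficients $(b-a)/2$ and $(d-c)/2$; dividing through by $(b-a)(d-c)$ produces the displayed left-hand side, while the four bracketed $q$-norm expressions read off from the theorem match the four parenthetical terms in the stated midpoint inequality. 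I expect the only step requiring care to be the prefactor arithmetic in part (5); parts (1)--(4) are pure bookkeeping of vanishing factors.
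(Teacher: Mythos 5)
Your proposal is correct and is exactly the route the paper intends: the corollary is obtained by direct substitution of the prescribed $(x,y)$ into inequality (\ref{230}) of Theorem \ref{t3}, with the vanishing linear factors killing three of the four summands and collapsing $A$ in parts (1)--(4), and with the prefactor arithmetic $2^{2-2/q}/16=1/(4\cdot 2^{2/q})$ handling part (5). All substitutions and the resulting coefficients check out against \eqref{c31}--\eqref{c34} and the midpoint inequality.
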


\begin{remark}
\bigskip From sum of (\ref{c31})-(\ref{c34}), we get;%
\begin{eqnarray*}
&& \\
&&\left\vert f\left( a,c\right) -\left( b-a\right)
\int\limits_{c}^{d}f\left( a,v\right) dv-\left( d-c\right)
\int\limits_{a}^{b}f\left( u,c\right)
du+\int\limits_{a}^{b}\int\limits_{c}^{d}f\left( u,v\right) dudv\right\vert 
\\
&&+\left. \left\vert f\left( a,d\right) -\left( b-a\right)
\int\limits_{c}^{d}f\left( a,v\right) dv-\left( d-c\right)
\int\limits_{a}^{b}f\left( u,d\right)
du+\int\limits_{a}^{b}\int\limits_{c}^{d}f\left( u,v\right) dudv\right\vert
\right.  \\
&&+\left. \left\vert f\left( b,c\right) -\left( b-a\right)
\int\limits_{c}^{d}f\left( b,v\right) dv-\left( d-c\right)
\int\limits_{a}^{b}f\left( u,c\right)
du+\int\limits_{a}^{b}\int\limits_{c}^{d}f\left( u,v\right) dudv\right\vert
\right.  \\
&&+\left\vert f\left( b,d\right) -\left( b-a\right)
\int\limits_{c}^{d}f\left( b,v\right) dv-\left( d-c\right)
\int_{a}^{b}f\left( u,d\right)
du+\int\limits_{a}^{b}\int\limits_{c}^{d}f\left( u,v\right) dudv\right\vert 
\end{eqnarray*}%
\begin{eqnarray}
&\leq &\frac{4\left( b-a\right) ^{2}\left( d-c\right) ^{2}}{\left( 2\left(
s+1\right) \left( s+2\right) \right) ^{\frac{2}{q}}}\times   \notag \\
&&\left\{ \left( \left\vert \frac{\partial ^{2}f}{\partial t\partial \lambda 
}\left( a,c\right) \right\vert ^{q}+\left( s+1\right) \left\vert \frac{%
\partial ^{2}f}{\partial t\partial \lambda }\left( a,d\right) \right\vert
^{q}+\left( s+1\right) \left\vert \frac{\partial ^{2}f}{\partial t\partial
\lambda }\left( b,c\right) \right\vert ^{q}+\left( s+1\right) ^{2}\left\vert 
\frac{\partial ^{2}f}{\partial t\partial \lambda }\left( b,d\right)
\right\vert ^{q}\right) ^{\frac{1}{q}}\right.   \notag \\
&&+\left( \left( s+1\right) ^{2}\left\vert \frac{\partial ^{2}f}{\partial
t\partial \lambda }\left( a,c\right) \right\vert ^{q}+\left( s+1\right)
\left\vert \frac{\partial ^{2}f}{\partial t\partial \lambda }\left(
a,d\right) \right\vert ^{q}+\left( s+1\right) \left\vert \frac{\partial ^{2}f%
}{\partial t\partial \lambda }\left( b,c\right) \right\vert ^{q}+\left\vert 
\frac{\partial ^{2}f}{\partial t\partial \lambda }\left( b,d\right)
\right\vert ^{q}\right) ^{\frac{1}{q}}  \notag \\
&&+\left( \left( s+1\right) \left\vert \frac{\partial ^{2}f}{\partial
t\partial \lambda }\left( a,c\right) \right\vert ^{q}+\left\vert \frac{%
\partial ^{2}f}{\partial t\partial \lambda }\left( a,d\right) \right\vert
^{q}+\left( s+1\right) ^{2}\left\vert \frac{\partial ^{2}f}{\partial
t\partial \lambda }\left( b,c\right) \right\vert ^{q}+\left( s+1\right)
\left\vert \frac{\partial ^{2}f}{\partial t\partial \lambda }\left(
b,d\right) \right\vert ^{q}\right) ^{\frac{1}{q}}  \notag \\
&&\left. +\left( \left( s+1\right) \left\vert \frac{\partial ^{2}f}{\partial
t\partial \lambda }\left( a,c\right) \right\vert ^{q}+\left( s+1\right)
^{2}\left\vert \frac{\partial ^{2}f}{\partial t\partial \lambda }\left(
a,d\right) \right\vert ^{q}+\left\vert \frac{\partial ^{2}f}{\partial
t\partial \lambda }\left( b,c\right) \right\vert ^{q}+\left( s+1\right)
\left\vert \frac{\partial ^{2}f}{\partial t\partial \lambda }\left(
b,d\right) \right\vert ^{q}\right) ^{\frac{1}{q}}\right\}   \notag
\end{eqnarray}
\end{remark}

\bigskip

\end{document}